\newtheorem{thm}{Theorem}[section]
\newtheorem{lem}[thm]{Lemma}
\newtheorem{prop}[thm]{Proposition}
\newtheorem{question}[thm]{Question}
\theoremstyle{definition}
\newtheorem{defi}[thm]{Definition}
\DeclareFontFamily{T1}{pzc}{} 
\DeclareFontShape{T1}{pzc}{m}{it}{<-> s * [1.15] pzcmi8t}{} 
\DeclareMathAlphabet{\mathpzc}{T1}{pzc}{m}{it} 
\newcommand{\mm}{\mathpzc{M}}
\newcommand{\dd}{\mathpzc{D}}
\renewcommand{\aa}{\mathpzc{A}}
\newcommand{\A}{\mathpzc{A}}
\newcommand{\oo}{\mathcal{O}}
\newcommand{\B}{\mathpzc{B}}
\renewcommand{\phi}{\varphi}
\newcommand{\mw}{{\mm_w}}
\newcommand{\dom}{\mathrm{dom}}
\newcommand{\Th}{\mathrm{Th}}
\newcommand{\ba}{\mathbf{a}}
\newcommand{\bb}{\mathbf{b}}
\newcommand{\bc}{\mathbf{c}}
\newcommand{\bzero}{\mathbf{0}}
\newcommand{\M}{\mathpzc{M}}
\newcommand{\F}{\mathpzc{F}}
\begin{document}

\title{Natural Factors of the Muchnik Lattice Capturing IPC}
\author[R. Kuyper]{Rutger Kuyper}
\address[Rutger Kuyper]{Radboud University Nijmegen\\
Department of Mathematics\\
P.O. Box 9010, 6500 GL Nijmegen, the Netherlands.}
\email{r.kuyper@math.ru.nl}
\thanks{Research supported by NWO/DIAMANT grant 613.009.011 and by 
John Templeton Foundation grant 15619: `Mind, Mechanism and Mathematics: Turing Centenary Research Project'.}
\subjclass[2010]{03D30, 03B20, 03G10}
\keywords{Muchnik degrees, Intuitionistic logic, Kripke models}
\date{\today}
\maketitle

\begin{abstract}
We give natural examples of factors of the Muchnik lattice which capture intuitionistic propositional logic (IPC), arising from the concepts of lowness, 1-genericity, hyperimmune-freeness and computable traceability. This provides a purely computational semantics for IPC.
\end{abstract}

\section{Introduction}\label{sec-intro}

Ever since the introduction of intuitionistic logic by Heyting, there have been investigations into the computational content of proofs in intuitionistic logic. The best known of these is the investigation into realisability, which was initiated by Kleene in his 1945 paper \cite{kleene-1945}. Unfortunately, Kleene's original concept of realisability turns out to capture a proper extension of intuitionistic propositional logic (IPC). Nowadays, this field investigates not only Kleene realisability, but also many variations thereof; see e.g.\ the recent reference Van Oosten \cite{vanoosten-2008}.

Another approach to capture IPC in a computational way was provided by Medvedev \cite{medvedev-1955} and Muchnik \cite{muchnik-1963} in respectively 1955 and 1963. Their approaches, in the form of the \emph{Medvedev lattice} and the \emph{Muchnik lattice}, again turn out to fall short: they realise the weak law of the excluded middle $\neg p \vee \neg\neg p$. However, the study of these lattices did not end here, for multiple reasons.

First, the Medvedev and Muchnik lattices can be seen as generalisations of the Turing degrees (in fact, the Turing degrees can be embedded into both of these lattices). Therefore these lattices are of independent interest to computability theorists, regardless of any logical content they might carry. Research in this direction has increased in recent years; many details can be found in the surveys of Sorbi \cite{sorbi-1996} and Hinman \cite{hinman-2012}.

Furthermore, even on the logical side not all is lost: it turns out that we can repair the logical deficiency of the Medvedev and Muchnik lattices (i.e.\ the fact that they realise more than IPC). In \cite{skvortsova-1988}, Skvortsova shows that there is a factor of the Medvedev lattice which exactly captures IPC, and in Sorbi and Terwijn \cite{sorbi-terwijn-2012} the analogous result for the Muchnik lattice is shown.

These factors are obtained by taking the Medvedev or Muchnik lattice modulo a principal filter generated by some set $A$.  If we want to capture IPC in a truly computational way, we would want such a set $A$ to have some computational interpretation. Unfortunately, this is not the case for the sets $A$ appearing in the result of Skvortsova and in the result of Sorbi and Terwijn: instead of starting with some computationally motivated set $A$ and proving that the factor induced by this set $A$ captures IPC, they construct a set $A$ which exactly has the properties they require for their proof, but which does not seem to have any computational interpretation. In \cite{terwijn-2006}, Terwijn asked if there are any natural sets $A$ for which the factor captures IPC.

In the present paper, we will show that for the Muchnik lattice it is indeed possible to choose the set $A$ in a natural way (in the sense that it is definable using commonly used concepts from computability theory) and still obtain IPC as the theory of its factor. This way, we obtain a purely computational semantics for IPC. Aside from this, our results also put the computability-theoretic concepts used to define $A$ into a new light. Among these concepts are lowness, 1-genericity below $\emptyset'$, hyperimmune-freeness and computable traceability. Since our framework is general, our results could be adapted to suit other concepts.

In the next section we will briefly recall the structure of the Muchnik lattice and its factors. In section \ref{sec-splitting} we will describe our framework of \emph{splitting classes}. In section \ref{sec-low} we show that our framework is non-trivial by proving that the low functions and the functions of 1-generic degree below $\emptyset'$ fit in our framework. Next, in section \ref{sec-ipc} we prove that splitting classes naturally induce a factor of the Muchnik lattice which captures IPC. Finally, in section \ref{sec-hif} we consider whether two other concepts from computability theory give us splitting classes: hyperimmune-freeness and computable traceability.

Our notation is mostly standard. We let $\omega$ denote the natural numbers and $\omega^\omega$ the Baire space of functions from $\omega$ to $\omega$. For finite strings $\sigma,\tau$ we denote by $\sigma \subseteq \tau$ that $\sigma$ is a substring of $\tau$, by $\sigma \subset \tau$ that $\sigma$ is a proper substring of $\tau$ and by $\sigma \mid \tau$ that $\sigma$ and $\tau$ are incomparable. The concatenation of $\sigma$ and $\tau$ is denoted by $\sigma \star \tau$; for $n \in \omega$ we denote by $\sigma \star n$ the concatenation of $\sigma$ with the string $\langle n \rangle$. We assume a fixed, computable enumeration of the set of all finite binary strings. We let $\emptyset'$ denote the halting problem. By $\{e\}^A(n)[m] \downarrow$ we mean that the $e$th Turing machine with oracle $A$ and input $n$ terminates in at most $m$ steps. For functions $f,g \in \omega^\omega$ we denote by $f \oplus g$ the join of the functions $f$ and $g$, i.e.\ $(f \oplus g)(2n) = f(n)$ and $(f \oplus g)(2n+1) = g(n)$. For a poset $(X,\leq)$ and elements $x,y \in X$, we denote by $[x,y]_X$ the set of elements $u \in X$ satisfying $x \leq u \leq y$. For any set $\A \subseteq \omega^\omega$ we denote by $\overline{\A}$ its complement in $\omega^\omega$. When we say that a set is countable, we include the possibility that it is finite. For unexplained notions from computability theory, we refer to Odifreddi \cite{odifreddi-1989}, for the Muchnik and Medvedev lattices, we refer to the surveys of Sorbi \cite{sorbi-1996} and Hinman \cite{hinman-2012} (but we use the notation from Sorbi and Terwijn \cite{sorbi-terwijn-2012}), for lattice theory, we refer to Balbes and Dwinger \cite{balbes-dwinger-1974}, and finally for unexplained notions about Kripke semantics we refer to Chagrov and Zakharyaschev \cite{chagrov-zakharyaschev-1997}.

\section{Muchnik lattice and Brouwer algebras}\label{sec-muchnik}

We begin by briefly recalling the definition of and some elementary facts about the Muchnik lattice.

\begin{defi}{\rm(Muchnik \cite{muchnik-1963})}
Let $\aa,\B \subseteq \omega^\omega$ (we will call such subsets of $\omega^\omega$ \emph{mass problems}). We say that $\aa$ \emph{Muchnik reduces to} $\B$ (notation: $\aa \leq_w \B$) if for every $g \in \B$ there exists an $f \in \aa$ such that $f \leq_T g$. If $\aa \leq_w \B$ and $\B \leq_w \aa$ we say that $\aa$ and $\B$ are \emph{Muchnik equivalent} (notation: $\aa \equiv_w \B$). The equivalence classes of Muchnik equivalence are called \emph{Muchnik degrees} and the set of Muchnik degrees is denoted by $\M_w$.
\end{defi}

To avoid confusion, we do not use $\vee$ for the join (least upper bound) or $\wedge$ for the meet (greatest lower bound) in lattices, because later on we will see that the join corresponds to the logical conjunction $\wedge$ and that the meet corresponds to the logical disjunction $\vee$. Instead, we use $\oplus$ for join and $\otimes$ for meet.

\begin{defi}{\rm(McKinsey and Tarski \cite{mckinsey-tarski-1946})}
A \emph{Brouwer algebra} is a bounded distributive lattice together with a binary \emph{implication operator} $\to$ satisfying:
\[a \oplus c \geq b \text{ if and only if } c \geq a \to b\]
i.e.\ $a \to b$ is the least element $c$ satisfying $a \oplus c \geq b$.
\end{defi}

First, we give a simple example of a Brouwer algebra.

\begin{defi}\label{defi-upset}
Let $(X,\leq)$ be a poset. We say that a subset $Y \subseteq X$ is \emph{upwards closed} or is an \emph{upset} if for all $y \in Y$ and all $x \in X$ with $x \geq y$ we have $x \in Y$. Similarly, we say that $Y \subseteq X$ is \emph{downwards closed} or a \emph{downset} if for all $y \in Y$ and all $x \in X$ with $x \leq y$ we have $x \in Y$.

We denote by $\oo(X)$ the collection of all upwards closed subsets of $X$, ordered under reverse inclusion $\supseteq$.
\end{defi}

\begin{prop}
$\oo(X)$ is a Brouwer algebra under the operations $U \oplus V = U \cap V$, $U \otimes V = U \cup V$ and
\[U \to V = \{x \in X \mid \forall y \geq x (y \in U \Rightarrow y \in V)\}.\]
\end{prop}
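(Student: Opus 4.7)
The plan is straightforward verification: each Brouwer-algebra axiom follows by unwinding set-theoretic definitions, the one subtlety being that the order is \emph{reverse} inclusion, so joins in $(\oo(X),\supseteq)$ correspond to intersections and meets to unions. I would organise the argument in three short steps.

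First, closure under the three operations. That $U \cap V$ and $U \cup V$ are upsets whenever $U$ and $V$ are is immediate from the definition. To see that $U \to V$ is also an upset, take $x \in U \to V$ and $x' \geq x$; for any $y \geq x'$ one has $y \geq x$, so $y \in U$ implies $y \in V$, whence $x' \in U \to V$.

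Second, the bounded distributive lattice structure. Under reverse inclusion the top element is $\emptyset$ and the bottom is $X$. An upper bound of $\{U,V\}$ in $(\oo(X),\supseteq)$ is an upset $W$ with $W \subseteq U$ and $W \subseteq V$, and the largest such set (hence the least under $\supseteq$) is $U \cap V$; dually the greatest lower bound is $U \cup V$. Distributivity is inherited from the distributivity of $\cap$ and $\cup$ on all subsets of $X$.

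Third, and this is the main content, the adjunction $U \oplus W \geq V \iff W \geq U \to V$, which in set-theoretic terms reads $U \cap W \subseteq V \iff W \subseteq U \to V$. For the forward direction, pick $x \in W$ and $y \geq x$ with $y \in U$; since $W$ is upwards closed $y \in W$, so $y \in U \cap W \subseteq V$, giving $x \in U \to V$. For the reverse direction, pick $x \in U \cap W$; then $x \in W \subseteq U \to V$, and applying the defining condition of $U \to V$ at $y = x$ itself yields $x \in V$.

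The only real pitfall is keeping the direction of the order straight — under reverse inclusion the join becomes intersection and $\emptyset$ becomes the top — but beyond that there is no genuine obstacle; the adjunction step is the only one with any actual content and it amounts to a two-line argument using upwards closure of $W$.
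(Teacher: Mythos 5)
Your proof is correct, but it takes a different route from the paper. The paper disposes of the proposition in two lines: it observes that the upsets of a poset are closed under arbitrary unions and intersections and hence form a topology (the Alexandrov topology), and then cites Balbes and Dwinger for the general fact that the open sets of any topological space form a Heyting algebra (order-dually, a Brouwer algebra), with the stated implication being the interior of $\overline{U} \cup V$ in that topology. You instead verify all the axioms directly: closure of $\oo(X)$ under the three operations, the identification of join and meet under reverse inclusion, and the adjunction $U \cap W \subseteq V \iff W \subseteq U \to V$, whose two directions you argue correctly (upward closure of $W$ for one direction, instantiating at $y = x$ for the other). Your version is self-contained and elementary, and it makes explicit exactly where upward closure is used --- which the citation hides; the paper's version is shorter and situates the example inside a general theory, at the cost of an external reference and a duality remark relegated to a footnote. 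Both are complete proofs.
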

\begin{proof}
The upwards closed sets of a poset form a topology (because they are closed under arbitrary unions and intersections). The result now follows from Balbes and Dwinger \cite[IX.3, Example 4]{balbes-dwinger-1974}.\footnote{In most literature, including Balbes and Dwinger, results are proved for Heyting algebras, the order-dual of Brouwer algebras. However, all results we cite directly follow for Brouwer algebras in the same way.}
\end{proof}

It turns out that the Muchnik lattice is also a Brouwer algebra.

\begin{prop}{\rm(Muchnik \cite{muchnik-1963})}
The Muchnik lattice is a Brouwer algebra under the operations induced by:
\begin{align*}
\aa \oplus \B &= \{f \oplus g \mid f \in \aa \text{ and } g \in \B\}\\
\aa \otimes \B &= \aa \cup \B\\
\aa \to \B &= \{g \in \omega^\omega \mid \forall f \in \A \exists h \in \B(f \oplus g \geq_T h) \}
\end{align*}
\end{prop}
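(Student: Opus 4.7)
The plan is to verify two things: that $(\mw, \oplus, \otimes)$ is a bounded distributive lattice, and that $\to$ as defined satisfies the Brouwer adjunction $\aa \oplus \ee \geq_w \B \iff \ee \geq_w \aa \to \B$. Since every operation is specified directly on representatives, each verification reduces to unpacking the definition of Muchnik reducibility.

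First I would check the lattice operations. For the meet $\aa \otimes \B = \aa \cup \B$: any $g \in \aa$ already lies in $\aa \cup \B$ and computes itself, so $\aa \cup \B \leq_w \aa$, and symmetrically $\aa \cup \B \leq_w \B$; conversely, if $\ee \leq_w \aa$ and $\ee \leq_w \B$, then for each $h \in \aa \cup \B$ a case distinction on whether $h \in \aa$ or $h \in \B$ provides an $f \in \ee$ with $f \leq_T h$. For the join $\aa \oplus \B$: every $f \oplus g$ computes both $f$ and $g$, giving $\aa, \B \leq_w \aa \oplus \B$; and if $\aa, \B \leq_w \ee$, then for each $h \in \ee$ one finds $f \in \aa$ and $g \in \B$ with $f, g \leq_T h$, whence $f \oplus g \leq_T h$. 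The bottom is represented by any mass problem containing a computable function (e.g.\ $\{0^\omega\}$), since computable functions Turing-reduce to everything; the top is the empty mass problem $\emptyset$, for which $\aa \leq_w \emptyset$ holds vacuously. Distributivity is then immediate from the set-theoretic identity $\aa \oplus (\B \cup \ee) = (\aa \oplus \B) \cup (\aa \oplus \ee)$.

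The substance of the proof is the adjunction. Suppose first that $\aa \oplus \ee \geq_w \B$, so for every $f \in \aa$ and $g \in \ee$ there is some $h \in \B$ with $h \leq_T f \oplus g$. Then every $g \in \ee$ already satisfies the defining clause of $\aa \to \B$, so $\ee \subseteq \aa \to \B$, and hence $\ee \geq_w \aa \to \B$ by taking each $g \in \ee$ to witness itself. Conversely, suppose $\ee \geq_w \aa \to \B$: given $f \in \aa$ and $g \in \ee$, choose $g' \in \aa \to \B$ with $g' \leq_T g$; by definition of $\aa \to \B$ there is $h \in \B$ with $h \leq_T f \oplus g'$, and since $f \oplus g' \leq_T f \oplus g$ we conclude $h \leq_T f \oplus g$, giving $\aa \oplus \ee \geq_w \B$.

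There is no genuine obstacle here: the formula for $\aa \to \B$ is essentially forced by requiring $g$ to uniformly witness the reduction $\aa \oplus \{g\} \geq_w \B$ as $f$ ranges over $\aa$, and the adjunction then falls out of one direct substitution in each direction. The only mild subtlety is to keep in mind that Muchnik reducibility runs against the intuitive direction of set inclusion (more solutions means an easier problem), so the meet corresponds to union and the top corresponds to the empty mass problem.
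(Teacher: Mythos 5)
Your proof is correct; the paper gives no proof of this proposition at all (it is simply attributed to Muchnik), and your direct verification --- the universal properties of $\oplus$ and $\otimes$, the bounds, distributivity from the set identity $\A \oplus (\B \cup \ee) = (\A \oplus \B) \cup (\A \oplus \ee)$, and both directions of the adjunction characterising $\A \to \B$ as the least $c$ with $\A \oplus c \geq_w \B$ --- is the standard argument. The only point left implicit is that the operations are well defined on Muchnik degrees rather than on representatives, but this is automatic because each operation is characterised up to $\equiv_w$ by an order-theoretic universal property, exactly the properties you verify.
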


\begin{prop}\label{prop-upw-iso}
The Muchnik lattice is isomorphic to the lattice of upsets of the Turing degrees.
\end{prop}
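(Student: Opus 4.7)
The plan is to exhibit an explicit bijection between Muchnik degrees and upsets of the Turing degrees $\dd_T$ that converts Muchnik reduction into reverse inclusion. Define
\[
\phi(\aa) \;=\; \{\, d \in \dd_T \;\mid\; \exists f \in \aa \ (f \leq_T d)\,\},
\]
i.e.\ the set of Turing degrees that compute some member of $\aa$. First I would check that $\phi(\aa)$ is upwards closed in $(\dd_T,\leq_T)$ (trivial from transitivity of $\leq_T$), so $\phi(\aa) \in \oo(\dd_T)$.

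The key observation is that, directly from the definition of Muchnik reduction, $\aa \leq_w \B$ holds if and only if every $g \in \B$ computes some $f \in \aa$, which is precisely $\phi(\B) \subseteq \phi(\aa)$. Reading this equivalence in both directions gives, in one step, that $\phi$ is well-defined on Muchnik degrees (since $\aa \equiv_w \B$ forces $\phi(\aa) = \phi(\B)$), is order-preserving with respect to the reverse-inclusion order on $\oo(\dd_T)$, and is order-reflecting (hence injective on degrees).

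For surjectivity, given an upset $U \in \oo(\dd_T)$, take $\aa_U = \bigcup_{d \in U} d \subseteq \omega^\omega$, the set of all functions whose Turing degree lies in $U$. Then clearly $U \subseteq \phi(\aa_U)$, and conversely if $d \in \phi(\aa_U)$ there is some $f \in \aa_U$ with $f \leq_T d$; but $\deg_T(f) \in U$ and $U$ is upwards closed, so $d \in U$. Thus $\phi(\aa_U) = U$.

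This gives a poset isomorphism $\phi : \mm_w \to \oo(\dd_T)$, and since both structures are lattices whose operations are determined by the order (meets and joins are unique), $\phi$ is automatically a lattice isomorphism. There is no serious obstacle here; the only point requiring mild care is keeping track of the fact that the order on $\oo(\dd_T)$ is \emph{reverse} inclusion, so that Muchnik reduction $\leq_w$ (a mass problem becomes easier when it gains more solutions) correctly matches the convention in Definition~\ref{defi-upset}.
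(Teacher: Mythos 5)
Your proof is correct and takes essentially the same route as the paper: your map $\phi(\aa)$ is precisely the paper's upward closure $C(\aa)=\{f\in\omega^\omega\mid\exists g\in\aa\,(g\leq_T f)\}$ read at the level of Turing degrees, and your verification that $\aa\leq_w\B$ corresponds to reverse inclusion is exactly the ``directly verified'' step the paper leaves implicit. The only difference is that the paper additionally cites the fact that an order isomorphism between Brouwer algebras automatically preserves the implication, which is used later but is not needed for the statement as phrased.
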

\begin{proof}
We use a proof inspired by Muchnik's proof that the Muchnik degrees can be embedded in the Medvedev degrees (preserving 0, 1 and minimal upper bounds) from \cite{muchnik-1963}. For every $\A \subseteq \omega^\omega$, we have that $\A \equiv_w C(\A) := \{f \in \omega^\omega \mid \exists g \in \A (g \leq_T f)\}$. Now it is directly verified that the mapping sending $\A$ to $C(\A)$ induces an order isomorphism between $\mw$ and $\oo(\dd)$ (as defined in Definition \ref{defi-upset}). Finally, every order isomorphism between Brouwer algebras is automatically a Brouwer algebra isomorphism, see Balbes and Dwinger \cite[IX.4, Exercise 3]{balbes-dwinger-1974}.
\end{proof}

The main motivation behind Brouwer algebras is that they allow us to specify semantics containing IPC.

\begin{defi}{\rm(McKinsey and Tarski \cite{mckinsey-tarski-1948})}
Let $\phi(x_1,\dots,x_n)$ be a propositional formula with free variables among $x_1,\dots,x_n$, let $B$ be a Brouwer algebra and let $b_1,\dots,b_n \in B$. Let $\psi$ be the formula in the language of Brouwer algebras obtained from $\phi$ by replacing logical disjunction $\vee$ by $\otimes$, logical conjunction $\wedge$ by $\oplus$, logical implication $\to$ by Brouwer implication $\to$ and the false formula $\bot$ by $1$ (we view negation $\neg\alpha$ as $\alpha \to \bot$). We say that $\phi(b_1,\dots,b_n)$ \emph{holds in $B$} if $\psi(b_1,\dots,b_n) = 0$. Furthermore, we define the \emph{theory} of $B$ (notation: $\Th(B)$) to be the set of those formulas which hold for every valuation, i.e.\
\[\Th(B) = \{\phi(x_1,\dots,x_m) \mid \forall b_1,\dots,b_m \in B(\phi(b_1,\dots,b_m) \text{ holds in } B)\}.\]
\end{defi}

The following soundness result is well-known and directly follows from the observation that all rules in some fixed deduction system for IPC preserve truth.

\begin{prop}{\rm(McKinsey and Tarski \cite[Theorem 4.1]{mckinsey-tarski-1948})}
For every Brouwer algebra $B$: $\mathrm{IPC} \subseteq \Th(B)$.
\end{prop}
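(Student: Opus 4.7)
The plan is to fix some Hilbert-style deductive system for IPC (with, say, the schemas $\alpha \to (\beta \to \alpha)$, $(\alpha \to (\beta \to \gamma)) \to ((\alpha \to \beta) \to (\alpha \to \gamma))$, the introduction/elimination schemas for $\wedge$ and $\vee$, and $\bot \to \alpha$, with modus ponens as the sole inference rule) and proceed by induction on the length of a derivation. Two things must be checked: that under any valuation $v:\Var \to B$, every axiom schema translates to $0$, and that modus ponens preserves this property.

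Both are routine manipulations of the defining adjunction $a \oplus c \geq b \iff c \geq a \to b$, combined with the facts that $\oplus$ is the join, $\otimes$ is the meet, that the lattice is distributive, and that it is bounded by $0$ below and $1$ above. For instance, to verify the K-axiom $\alpha \to (\beta \to \alpha)$, write $a,b$ for the translations of $\alpha,\beta$; then $a \to (b \to a) = 0$ reduces (since $0$ is the bottom) to $0 \geq a \to (b \to a)$, and two uses of the adjunction plus the identity $a \oplus 0 = a$ bring this down to $b \oplus a \geq a$, which is automatic because $\oplus$ is join. For the absurdity schema $\bot \to \alpha$ one computes that $1 \to a = 0$ reduces to $1 \oplus 0 \geq a$, true because $1$ is top. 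Modus ponens is equally immediate: from $a = 0$ and $a \to b = 0$, the adjunction applied to the latter gives $a \oplus 0 \geq b$, so $0 \geq b$, hence $b = 0$. The conjunction, disjunction and distributivity schemas drop out from the universal properties of join and meet together with distributivity.

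The main obstacle is purely clerical, namely picking a concrete axiomatization and running through each schema; no genuine mathematical difficulty arises. One could equivalently observe that a Brouwer algebra is nothing but the order-dual of a Heyting algebra, so the statement is the dualization of the well-known Heyting soundness theorem of McKinsey and Tarski, whose proof carries over verbatim modulo reversing the order.
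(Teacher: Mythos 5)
Your proposal is correct and is exactly the argument the paper has in mind: the paper states that the result ``directly follows from the observation that all rules in some fixed deduction system for IPC preserve truth'' and then simply cites Chagrov and Zakharyaschev for the details, which is precisely the routine induction on derivations (axioms evaluate to $0$, modus ponens preserves this via the adjunction) that you carry out. Your dual-order remark matches the paper's own footnote that all cited Heyting-algebra results transfer verbatim to Brouwer algebras.
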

\begin{proof}
See e.g.\ Chagrov and Zakharyaschev \cite[Theorem 7.10]{chagrov-zakharyaschev-1997}.
\end{proof}

As discussed in the introduction, one might hope that the computationally motivated Muchnik lattice has IPC as its theory. However, it is easily verified that the weak law of the excluded middle $\neg p \vee \neg\neg p$ holds in the Muchnik lattice, while it does not hold in IPC. Fortunately, it turns out we can still capture IPC by looking at certain factors of the Muchnik lattice.

\begin{prop}
Let $B$ be a Brouwer algebra. For every principal filter $\F$ generated by some element $x \in B$, $B / \F$ is a Brouwer algebra (also denoted by $B / x$) under the implication defined on the equivalence classes by
\[[y] \to_{B / \F} [z] = [(y \otimes x) \to_B (z \otimes x)].\]
\end{prop}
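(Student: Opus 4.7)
The plan is to work with the standard description of the congruence on $B$ induced by $\F$: namely $y \sim z$ iff $y \otimes x = z \otimes x$ --- the usual formula for the congruence of a principal filter in a distributive lattice, equivalently, of the principal ideal generated by $x$ in the Heyting dual (cf.\ Balbes and Dwinger). That $\sim$ is a congruence requires distributivity in the $\oplus$-case, and then $B/\F$ becomes a bounded distributive lattice in the obvious way, with order $[y] \leq [z]$ iff $y \otimes x \leq z \otimes x$.

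Well-definedness of the proposed implication is immediate: if $y \sim y'$ and $z \sim z'$ then $y \otimes x = y' \otimes x$ and $z \otimes x = z' \otimes x$, so $(y \otimes x) \to_B (z \otimes x)$ and $(y' \otimes x) \to_B (z' \otimes x)$ are literally equal as elements of $B$, hence represent the same class.

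The real content is verifying the Brouwer property $[a] \oplus [c] \geq [b]$ iff $[c] \geq [a] \to [b]$. Writing $a' = a \otimes x$, $b' = b \otimes x$, $c' = c \otimes x$, the two sides unfold via distributivity and the quotient order to
\[ a' \oplus c' \geq b' \qquad \text{and} \qquad c' \geq (a' \to_B b') \otimes x \]
respectively. The forward direction is a direct application of the Brouwer property in $B$: it yields $c' \geq a' \to_B b'$, whence $c' \geq (a' \to_B b') \otimes x$. The reverse direction is where I expect the main technical step; the key calculation is
\[ a' \oplus \bigl((a' \to_B b') \otimes x\bigr) = \bigl(a' \oplus (a' \to_B b')\bigr) \otimes (a' \oplus x) = \bigl(a' \oplus (a' \to_B b')\bigr) \otimes x \geq b' \otimes x = b', \]
combining distributivity, the fact that $a' \leq x$, modus ponens $a' \oplus (a' \to_B b') \geq b'$ in $B$, and $b' \leq x$. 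Everything else is routine lattice bookkeeping; the only place where genuine care is needed is the $\otimes x$ appearing inside the implication, which is exactly what this calculation is designed to absorb.
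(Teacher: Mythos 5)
Your proof is correct and follows essentially the same route as the paper's: both directions rest on the congruence $y \sim y\otimes x$, distributivity, modus ponens $a'\oplus(a'\to_B b')\geq b'$ for the easy inequality, and the defining property of $\to_B$ for the minimality direction. You merely carry out the computation on the canonical representatives $a\otimes x$ rather than on equivalence classes, and make well-definedness explicit, which the paper leaves implicit.
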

\begin{proof}
On one hand we have (because $[y \otimes x] = [y]$ by definition of $B / \F$):
\[[(y \otimes x) \to_B (z \otimes x)] \oplus [y]
= [((y \otimes x) \to_B (z \otimes x)) \oplus (y \otimes x)]
\geq [z \otimes x].\]
On the other hand, for any element $u$ such that $[y] \oplus [u] \geq [z]$ we have that $[y \oplus u \oplus z] = [y \oplus u]$ so $(y \oplus u \oplus z) \otimes x = (y \oplus u) \otimes x$ by definition of $B /x$. Then distributivity shows that
\[(y \otimes x) \oplus (u \otimes x) \oplus (z \otimes x) = (y \otimes x) \oplus (u \otimes x)\]
i.e.\ $(y \otimes x) \oplus (u \otimes x) \geq (z \otimes x)$. So, since $B$ is a Brouwer algebra we see that $u \otimes x \geq (y \otimes x) \to_B (z \otimes x)$, and therefore $[u] = [u \otimes x] \geq [(y \otimes x) \to_B (z \otimes x)]$.
\end{proof}

Taking such a factor essentially amounts to moving from the entire algebra to just the interval $[0,x]_{\mw}$ of elements below $x$ (indeed, the factor is isomorphic to this interval). Because the top element of $[0,x]_{\mw}$ is smaller than the top element of $\mw$ if $x \neq 1$, the interpretation of negation $\neg b$, which is defined as $b \to 1$, also differs between these two algebras. Thus, taking a factor roughly corresponds to changing the negation.

The following result, an analogue of the same result for the Medvedev lattice by Skvortsova \cite{skvortsova-1988}, shows that there exists a factor of the Muchnik lattice with IPC as its theory.

\begin{thm}{\rm(Sorbi and Terwijn \cite{sorbi-terwijn-2012})}
There exists a mass problem $\A \subseteq \omega^\omega$ such that $\Th(\mw / \A) = \mathrm{IPC}$.
\end{thm}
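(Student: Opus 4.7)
The plan is to invoke Kripke completeness of IPC with respect to finite rooted trees: a propositional formula $\phi$ belongs to IPC if and only if it holds in $\oo(T)$ under every valuation, for every finite rooted tree $T$. Combined with the soundness proposition above, which gives $\Th(\mw/\A) \supseteq \mathrm{IPC}$ for free, it suffices to construct a single mass problem $\A$ such that, for each $\phi \notin \mathrm{IPC}$, some valuation in $\mw/\A$ refutes $\phi$.

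First, I would establish an embedding lemma: for every finite rooted tree $T$, there are a mass problem $\B_T$ and elements $[\uu_1],\dots,[\uu_k] \in \mw/\B_T$ whose generated sub-Brouwer-algebra is isomorphic to $\oo(T)$. The strategy exploits Proposition \ref{prop-upw-iso}: one picks $k$ Turing degrees (or antichains of degrees) whose pattern of joins and meets in $\dd$ mirrors the comparability pattern of the nodes of $T$, takes each $\uu_i$ to be the principal upset corresponding to the $i$th node, and lets $\B_T$ be a mass problem ``truncating'' $\dd$ at the bottom of this configuration. Because $\mw \cong \oo(\dd)$, the operations $\oplus,\otimes$ read off correctly on the generators; and modulo $\B_T$, the Brouwer implication computes inside the truncated upset-structure and hence coincides with the implication of $\oo(T)$.

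Second, I would combine the $\B_T$ for a countable family of trees into a single $\A$. Enumerate the non-theorems $\phi_1,\phi_2,\dots$ of IPC and fix, for each $n$, a finite tree $T_n$ together with a valuation in $\oo(T_n)$ refuting $\phi_n$. Code the construction from the first step into disjoint ``columns'' of $\omega^\omega$, chosen so that the columns are sufficiently Turing-independent, and let $\A$ be the (Muchnik) join across $n$ of the column-relativised $\B_{T_n}$. The $n$th valuation then refutes $\phi_n$ in $\mw/\A$, whence $\Th(\mw/\A) \subseteq \bigcap_n \Th(\oo(T_n)) = \mathrm{IPC}$.

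The hard part will be verifying non-interference of the different columns: one must check that computing $(\uu_i \otimes \A) \to_{\mw} (\uu_j \otimes \A)$ with $\uu_i,\uu_j$ from the $n$th column does not allow oracles drawn from other columns to unexpectedly collapse equivalence classes which were genuinely distinct in $\oo(T_n)$. This is exactly the point at which the Muchnik setting is more tractable than the Medvedev setting — only existence, not uniformity, of reductions is required — and it is managed by ensuring that every column is both rich enough to carry its own embedding and Turing-independent of the others, so that Brouwer implications computed within a single column agree with those of the corresponding $\oo(T_n)$.
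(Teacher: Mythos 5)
First, a point of orientation: the paper does not prove this theorem at all --- it is quoted from Sorbi and Terwijn \cite{sorbi-terwijn-2012}, and the body of the paper then develops a \emph{different} proof of (a refinement of) it. Your plan is essentially the strategy of the cited source (and of Skvortsova's earlier Medvedev-lattice argument): use completeness of IPC for finite rooted trees, embed each finite algebra $\oo(T)$ into a factor, and amalgamate countably many such embeddings into one $\A$. The paper's own route is genuinely different and avoids your amalgamation step entirely: it takes $\A = \overline{\aa}$ for a single downward-closed class $\aa$ (a ``splitting class''), uses Proposition \ref{prop-much-to-kripke} to identify $\Th(\mw/\overline{\aa})$ with the theory of the \emph{one} Kripke frame $(\aa,\leq_T)$, and then builds a p-morphism from that frame onto the infinite binary tree $2^{<\omega}$ (Propositions \ref{prop-p-morphism} and \ref{prop-split-tree}, Theorem \ref{thm-tree-ipc}). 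What that buys is precisely the absence of any non-interference problem: there is a single frame and a single surjection, rather than countably many subalgebras that must be protected from one another inside a common implication.

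As a proof, however, your proposal has a genuine gap, and it sits exactly where you locate ``the hard part''. Both of your key steps are asserted rather than established. (1) The embedding lemma: choosing degrees ``whose pattern of joins and meets mirrors $T$'' is not enough, because the Brouwer implication in $\oo(\dd)$ (equivalently in $\mw$, via Proposition \ref{prop-upw-iso}) quantifies over \emph{all} degrees above a given one, not just those you selected; making $(\uu_i \otimes \B_T) \to (\uu_j \otimes \B_T)$ land back inside the finite configuration is the entire technical content of Skvortsova's and Sorbi--Terwijn's arguments (exact pairs/independent sequences and a careful choice of the generating upsets), and nothing in your sketch forces it. (2) The amalgamation: if $\A$ is the join of the $\B_{T_n}$, then the implication of $\mw/\A$ is computed relative to $\A$, not relative to $\B_{T_n}$, so the subalgebra you built inside $\mw/\B_{T_n}$ need not survive as a subalgebra of $\mw/\A$; ``sufficiently Turing-independent columns'' is the right slogan but is not a proof, and the soundness direction $\mathrm{IPC} \subseteq \Th(\mw/\A)$ is the only part that comes for free. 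So the architecture is sound and matches the literature, but the two lemmas carrying all the difficulty are left unproved; to complete the argument along these lines you would essentially have to reproduce the construction of \cite{sorbi-terwijn-2012}, whereas the paper's splitting-class approach gives an independent and, for the Muchnik lattice, cleaner proof.
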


The particular mass problem $\A$ from the previous theorem does not have an intuitive interpretation and is constructed in quite an ad-hoc manner. However, in this paper we will show that natural mass problems $\A$ such that the factor $\mw / \A$ captures IPC do exist.

\section{Splitting classes}\label{sec-splitting}

As announced above, we will present our results in a general framework so that additional examples can easily be obtained. Our framework of \emph{splitting classes} abstracts exactly what we need for our proof in section \ref{sec-ipc} to work. It roughly says that $\aa$ is a splitting class if, given some function $f \in \aa$, we can construct functions $h_0,h_1 \in \aa$ above it whose join is not in $\aa$ while `avoiding' a given finite set of other functions in $\aa$. This is made precise below.

\begin{defi}\label{def-split}
Let $\aa \subseteq \omega^\omega$ be a non-empty countable class which is downwards closed under Turing reducibility. We say that $\aa$ is a \emph{splitting class} if for every $f \in \aa$ and every finite subset $\B \subseteq \{g \in \aa \mid g \not\leq_T f\}$ there exist $h_0,h_1 \in \aa$ such that $h_0,h_1 \geq_T f$, $h_0 \oplus h_1 \not\in \aa$ and for all $g \in \B$: $g \oplus h_0, g \oplus h_1 \not\in \aa$.
\end{defi}

Note that, because every splitting class $\aa$ is downwards closed under Turing reducibility, we in particular have that $\aa$ is closed under Turing equivalence, i.e.\ if $f \in \aa$ and $g \equiv_T f$ then also $g \in \aa$.

We emphasise that we required a splitting class to be countable. There are also interesting examples which satisfy the requirements except for the countability: for example, in section \ref{sec-hif} we will see that this is the case for the set of hyperimmune-free functions. In that section we will also discuss how to suitably generalise the concept to classes of higher cardinality.

It turns our that in order to show that something is a splitting class it will be easier to prove that one of the two alternative formulations given by the next proposition holds.

\begin{prop}\label{prop-split-equiv}
Let $\aa \subseteq \omega^\omega$ be a non-empty countable class which is downwards closed under Turing reducibility. Then the following are equivalent:
\begin{enumerate}[\rm (i)]
\item $\aa$ is a splitting class.
\item For every $f \in \aa$ and every finite subset $\B \subseteq \{g \in \aa \mid g \not\leq_T f\}$ there exists $h \in \aa$ such that $h >_T f$ and for all $g \in \B$: $g \oplus h \not\in \aa$.
\item For every $f \in \aa$ there exists $h \in \aa$ such that $h \not\leq_T f$, and for every $f \in \aa$, every finite subset $\B \subseteq \{g \in \aa \mid g \not\leq_T f\}$ and every $h_0 \in \{g \in \aa \mid g \not\leq_T f\}$ there exists $h_1 \in \aa$ such that $h_1 >_T f$, $h_0 \oplus h_1 \not\in \aa$ and for all $g \in \B$: $h_1 \not\geq_T g$.
\end{enumerate}
\end{prop}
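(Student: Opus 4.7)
The plan is to establish the cycle (i) $\Rightarrow$ (ii) $\Leftrightarrow$ (iii) $\Rightarrow$ (i); only the implication (iii) $\Rightarrow$ (ii) requires real work. For (i) $\Rightarrow$ (ii): not both of $h_0, h_1$ from (i) can be Turing-equivalent to $f$, because then $h_0 \oplus h_1 \equiv_T f \in \aa$ by closure of $\aa$ under Turing equivalence, contradicting $h_0 \oplus h_1 \not\in \aa$; so take whichever of $h_0, h_1$ is strictly above $f$. For (ii) $\Rightarrow$ (i): apply (ii) once to $f, \B$ to obtain $h_0$, then apply (ii) again to $f, \B \cup \{h_0\}$ (valid since $h_0 >_T f$) to get $h_1$ with $h_0 \oplus h_1 \not\in \aa$ and $g \oplus h_1 \not\in \aa$ for $g \in \B$. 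For (ii) $\Rightarrow$ (iii): the first clause is just (ii) with $\B = \emptyset$; the second follows by applying (ii) to $f, \B \cup \{h_0\}$ and observing that $g \oplus h_1 \not\in \aa$ implies $h_1 \not\geq_T g$, for otherwise $g \oplus h_1 \equiv_T h_1 \in \aa$.

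The substantial direction is (iii) $\Rightarrow$ (ii). The obstacle is that a single application of (iii) only yields $h_1 \not\geq_T g$ for $g \in \B$, which is strictly weaker than the desired $g \oplus h_1 \not\in \aa$. To overcome this I would prove, by induction on $k$, the following strengthening: for every $f \in \aa$, every finite list $g'_1, \ldots, g'_k \in \{g \in \aa \mid g \not\leq_T f\}$, and every finite auxiliary set $C \subseteq \{g \in \aa \mid g \not\leq_T f\}$, there exists $h \in \aa$ with $h >_T f$, with $g'_i \oplus h \not\in \aa$ for every $i \leq k$, and with $h \not\geq_T g$ for every $g \in C$. Taking the $g'_i$ to be the elements of the $\B$ from (ii) and $C = \emptyset$ recovers (ii).

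The base case $k = 0$ is handled by picking $h' \not\leq_T f$ from the first clause of (iii) and then applying the second clause with $h_0 = h'$ and $\B = C$. For the step $k \to k+1$, apply the induction hypothesis to $g'_1, \ldots, g'_k$ with the \emph{augmented} auxiliary set $C \cup \{g'_{k+1}\}$, obtaining $h^* \in \aa$ with $h^* >_T f$, with $g'_i \oplus h^* \not\in \aa$ for $i \leq k$, and with $h^* \not\geq_T g$ for every $g \in C \cup \{g'_{k+1}\}$. These last conditions guarantee that $g'_{k+1}$ and every $g \in C$ lie in $\{g'' \in \aa \mid g'' \not\leq_T h^*\}$, which is exactly what is needed to apply the second clause of (iii) with $h^*$ in the role of $f$, $h_0 = g'_{k+1}$, and $C$ as the finite set. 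This yields $h \in \aa$ with $h >_T h^*$, $g'_{k+1} \oplus h \not\in \aa$, and $h \not\geq_T g$ for $g \in C$. The conditions $g'_i \oplus h^* \not\in \aa$ for $i \leq k$ lift automatically: since $h \geq_T h^*$, we have $g'_i \oplus h \geq_T g'_i \oplus h^*$, and the contrapositive of the downward closure of $\aa$ under $\leq_T$ transfers ``$\not\in \aa$'' upwards. The hard part is recognising that the induction hypothesis must be enriched with the avoidance set $C$, precisely so that in the inductive step one can legitimately shift the base point of (iii) from $f$ to $h^*$.
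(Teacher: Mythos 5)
Your proof is correct and follows essentially the same route as the paper's: the three easy implications are handled identically, and your induction on $k$ with the auxiliary avoidance set $C$ is exactly the paper's chain construction $f = h_{1,0} <_T h_{1,1} <_T \dots <_T h_{1,n}$ (which likewise kills one element of $\B$ per application of (iii) while keeping the not-yet-handled elements non-reducible to the current base point, and propagates ``$\not\in\aa$'' up the chain by downward closure), merely repackaged as a strengthened induction hypothesis. No gaps.
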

\begin{proof}
(i) $\to$ (ii): Let $h_0,h_1 \in \aa$ be such that $h_0,h_1 \geq_T f$, $h_0 \oplus h_1 \not\in \aa$ and for all $g \in \B$: $g \oplus h_0, g \oplus h_1 \not\in \aa$. Let $h = h_0$. Because $h \equiv_T f$ would imply that $h_0 \oplus h_1 \equiv_T h_1 \in \aa$ we see that $h >_T f$ and therefore we are done.

(ii) $\to$ (iii): First, for every $f \in \aa$ we can find $h \in \aa$ such that $h \not\leq_T f$ by applying (ii) with $\B = \emptyset$. Next, using (ii) determine $h_1 \in \aa$ such that $h_1 >_T f$ and for all $g \in \B \cup \{h_0\}$: $g \oplus h_1 \not\in \aa$. Then the only thing we still need to show is that $h \not\geq_T g$ for all $g \in \B$. However, $h \geq_T g$ would imply $h \oplus g \equiv_T h \in \aa$, a contradiction.

(iii) $\to$ (ii): Fix $g_1 \in \A$ such that $g_1 \not\leq_T f$. Let $\B \subseteq \{g \in \aa \mid g \not\leq_T f\}$ be finite. Without loss of generality, we may assume that $g_1 \in \B$; in particular, we may assume that $\B$ is non-empty. So, let $\B = \{g_1,\dots,g_n\}$. We inductively define a sequence $h_{1,0} <_T h_{1,1} <_T \dots <_T h_{1,n}$ of functions in $\aa$. First, we let $h_{1,0} = f$. Next, to obtain $h_{1,i+1}$ from $h_{1,i}$, apply (iii) to find a function $h_{1,i+1} >_T h_{1,i}$ such that $h_{1,i+1} \oplus g_{i+1} \not\in \aa$ and for all $i+2 \leq j \leq n$ we have $g_j \not\leq_T h_{1,i+1}$. Then $h := h_{1,n}$ is as desired.

(ii) $\to$ (i): Using (ii), we can find $h_0 \in \aa$ such that $h_0 >_T f$ and $g \oplus h_0 \not\in \aa$ for all $g \in \B$. By applying (ii) a second time, we can now find $h_1 \in \aa$ such that $h_1 >_T f$ and for all $g \in \B \cup \{h_0\}$: $g \oplus h_1 \not\in \aa$. Then $h_0$ and $h_1$ are as desired.
\end{proof}

\section{Low and 1-generic below $\emptyset'$ are splitting classes}\label{sec-low}

Before we show that splitting classes allow us to capture IPC as a factor of the Muchnik lattice, we want to demonstrate that our framework of splitting classes is non-trivial. To this end, we will show that the class of low functions, and that the class of functions of 1-generic degree below $\emptyset'$ together with the computable functions, are splitting classes. We will denote the first class by $\A_\mathrm{low}$ and the second class by $\A_{\mathrm{gen}\leq\emptyset'}$. We remark that the second class naturally occurs as the class of functions that are low for EX (as proved in Slaman and Solovay \cite{slaman-solovay-1991}).

Because these kinds of arguments are usually given as constructions on sets (or elements of Cantor space) rather than the functions (or elements of Baire space) which occur in the Muchnik lattice, we will work with sets instead of functions in this section. However, we do not use the compactness of Cantor space anywhere and therefore it is only a notational matter.

First, we recall some basic facts about 1-genericity over a set.

\begin{defi}{\rm(Jockusch \cite[p.\ 125]{jockusch-1980})}
Let $A,B \subseteq \omega$. We say that $B$ is \emph{1-generic} if for every $e \in \omega$ there exists $\sigma \subseteq B$ such that either $\{e\}^{\sigma}(e) \downarrow$ or for all $\tau \supseteq \sigma$ we have $\{e\}^{\tau}(e) \uparrow$.

More generally, we say that $B$ is \emph{1-generic over $A$} if for every $e \in \omega$ there exists $\sigma \subseteq B$ such that either $\{e\}^{A \oplus \sigma}(e) \downarrow$ or for all $\tau \supseteq \sigma$ we have $\{e\}^{A \oplus \tau}(e) \uparrow$.
\end{defi}

\begin{lem}\label{lem-gen-rel}{\rm(Folklore)}
Let $B$ be 1-generic over $A$. Then:
\begin{enumerate}[\rm (i)]
\item If $A$ is 1-generic, then $A \oplus B$ is 1-generic.
\item If $A$ is low and $B \leq_T \emptyset'$, then $A \oplus B$ is low.
\end{enumerate}
\end{lem}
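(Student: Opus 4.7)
For part (i), my plan is to use two successive applications of 1-genericity. Given $e$, let $e_1$ be an index such that $\{e_1\}^{A \oplus \beta}(e_1) \!\downarrow$ iff $\{e\}^{(A \restriction |\beta|) \oplus \beta}(e) \!\downarrow$; the program with oracle $A \oplus \beta$ can recover $A \restriction |\beta|$ from the even bits. Applying 1-genericity of $B$ over $A$ to $e_1$ produces $\beta_0 \subseteq B$ such that either $\{e\}^{(A \restriction |\beta_0|) \oplus \beta_0}(e) \!\downarrow$ -- in which case $(A \restriction |\beta_0|) \oplus \beta_0$ is an initial segment of $A \oplus B$ on which $\{e\}(e)$ converges and we are done -- or no $\beta' \supseteq \beta_0$ satisfies $\{e\}^{(A \restriction |\beta'|) \oplus \beta'}(e) \!\downarrow$.

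In the latter case, let $e_2$ be an index with $\{e_2\}^\alpha(e_2) \!\downarrow$ iff there exists $\beta \supseteq \beta_0$ of length $|\alpha|$ with $\{e\}^{\alpha \oplus \beta}(e) \!\downarrow$ (a c.e.\ search with $\beta_0$ as parameter), and apply 1-genericity of $A$. If the resulting $\alpha_0 \subseteq A$ satisfies $\{e_2\}^{\alpha_0}(e_2) \!\downarrow$, the witnessing $\beta$ contradicts the conclusion of the first step with $\beta' = \beta$; so no $\alpha' \supseteq \alpha_0$ satisfies $\{e_2\}^{\alpha'}(e_2) \!\downarrow$. Setting $n := \max(|\alpha_0|, |\beta_0|)$, I claim $(A \restriction n) \oplus (B \restriction n)$ is a deciding initial segment: any convergent $\{e\}^\tau(e) \!\downarrow$ with $\tau \supseteq (A \restriction n) \oplus (B \restriction n)$ can, by upward closure of $\{\sigma : \{e\}^\sigma(e) \!\downarrow\}$, be extended to even length and decomposed as $\tau = \alpha \oplus \beta$ with $\alpha \supseteq \alpha_0$ and $\beta \supseteq \beta_0$, contradicting the second step.

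For part (ii), my plan is to compute $(A \oplus B)'$ directly from $\emptyset'$. By 1-genericity of $B$ over $A$, for each $e$ there is some $\tau \subseteq B$ for which either $\{e\}^{A \oplus \tau}(e) \!\downarrow$ (forcing $\{e\}^{A \oplus B}(e) \!\downarrow$) or no $\tau' \supseteq \tau$ satisfies $\{e\}^{A \oplus \tau'}(e) \!\downarrow$ (forcing $\{e\}^{A \oplus B}(e) \!\uparrow$). The algorithm iterates $n = 0, 1, 2, \ldots$, uses $B \leq_T \emptyset'$ to compute $B \restriction n$, and uses $A' \leq_T \emptyset'$ (which holds by lowness of $A$) to test both the $\Sigma^0_1(A)$ condition $\{e\}^{A \oplus B \restriction n}(e) \!\downarrow$ and the $\Pi^0_1(A)$ condition that no $\tau' \supseteq B \restriction n$ satisfies $\{e\}^{A \oplus \tau'}(e) \!\downarrow$. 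By 1-genericity the search eventually succeeds, and its outcome yields $(A \oplus B)'(e)$.

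The main obstacle lies in part (i): a single application of 1-genericity only controls initial segments of the particular interleaved form $(A \restriction k) \oplus (B \restriction k)$, and the second application -- via 1-genericity of $A$ -- is required to rule out arbitrary extensions whose bits beyond the diagonal may disagree with $A$ or $B$. Part (ii) is then essentially a routine consequence of the fact that 1-genericity relative to $A$ reduces the jump-level question about $A \oplus B$ to a combination of $\Sigma^0_1(A)$ and $\Pi^0_1(A)$ queries, both answerable from $A'$.
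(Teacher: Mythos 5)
Your proof is correct and takes essentially the same route as the paper's: part (i) is the same two-step forcing argument (first apply genericity of $B$ over $A$, then apply genericity of $A$ to the derived condition ``some $\beta \supseteq \beta_0$ makes $\{e\}^{\alpha\oplus\beta}(e)$ converge'' in order to kill arbitrary extensions on the even side), and part (ii) is the paper's observation that 1-genericity of $B$ over $A$ turns both membership and non-membership in $(A\oplus B)'$ into questions decidable from $A'\oplus B\equiv_T\emptyset'$, merely phrased as a direct search algorithm rather than via the relativised Post's theorem. The only cosmetic difference is that your forcing conditions truncate $A$ at the length of the $B$-string, whereas the paper keeps the full oracle $A$ and extracts the finite initial segment at the end via the use principle.
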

\begin{proof}
(i): Assume $A$ is 1-generic. Let $e \in \omega$ . We need to find a $\sigma \subseteq A \oplus B$ such that either $\{e\}^\sigma(e) \downarrow$ or such that for all $\tau \supseteq \sigma$ we have $\{e\}^\tau(e) \uparrow$.

If $\{e\}^{A \oplus B} (e)\downarrow$, we can choose $\sigma \subseteq A \oplus B$ such that $\{e\}^\sigma(e) \downarrow$. Otherwise, since $B$ is 1-generic over $A$, we can determine $\sigma_B \subseteq B$ such that for all $\tau_B \supseteq \sigma_B$ we have $\{e\}^{A \oplus \tau_B}(e) \uparrow$. Fix an index $\tilde{e}$ such that for all $C \subseteq \omega$ and all $x \in \omega$:
\[\{\tilde{e}\}^C(x) \downarrow \Leftrightarrow \exists \tau_B \supseteq \sigma_B \{e\}^{C \oplus \tau_B}(e) \downarrow.\]

We first note that $\{\tilde{e}\}^{A}(x)\uparrow$ by our choice of $\sigma_B$. Therefore, using the 1-genericity of $A$, determine $\sigma_A \subseteq A$ such that for all $\tau_A \supseteq \sigma_A$ we have $\{\tilde{e}\}^{\tau_A}(\tilde{e}) \uparrow$. By choice of $\tilde{e}$ we then have for for all $\tau_A \supseteq \sigma_A$ that $\forall \tau_B \supseteq \sigma_B \{e\}^{\tau_A \oplus \tau_B}(e) \uparrow$, which is the same as saying that for all $\tau \supseteq \sigma_A \oplus \sigma_B$ we have $\{e\}^{\tau}(e) \uparrow$. This is exactly what we needed to show.

(ii) We show that both $(A \oplus B)'$ and its complement $\overline{(A \oplus B)'}$ are c.e.\ in $A' \oplus B \equiv_T \emptyset'$. To this end, we note that $e \in (A \oplus B)'$ if and only if 
\[\exists \sigma_A \subseteq A \exists \sigma_B \subseteq B\left(\{e\}^{\sigma_A \oplus \sigma_B}(e) \downarrow\right)\]
which is c.e.\ in $A \oplus B \leq_T A' \oplus B$. Next, using the fact that $B$ is 1-generic over $A$, we see that $e \not\in (A \oplus B)'$ if and only if
\[\exists \sigma_B \subseteq B \forall \tau_B \supseteq \sigma_B \left(\{e\}^{A \oplus \tau_B} (e) \uparrow\right)\]
which is c.e.\ in $A' \oplus B$. The result now follows by the relativised Post's theorem.
\end{proof}

\begin{thm}
$\A_\mathrm{low}$ and $\A_{\mathrm{gen}\leq\emptyset'}$ are splitting classes.
\end{thm}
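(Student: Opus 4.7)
The plan is to verify the structural conditions first and then, via Proposition~\ref{prop-split-equiv}(ii), to construct a single $h\in\aa$ witnessing the splitting property: given $f\in\aa$ and a finite $\B=\{g_1,\dots,g_n\}\subseteq\{g\in\aa\mid g\not\leq_T f\}$, find $h\in\aa$ with $h>_T f$ and $g_i\oplus h\notin\aa$ for every $i$.

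Non-emptiness is immediate (both classes contain the computable sets) and countability follows because both classes consist of sets of Turing degree $\leq_T\emptyset'$, of which there are only countably many. Downward closure of $\A_{\mathrm{low}}$ under $\leq_T$ is a direct consequence of the monotonicity of the jump. Downward closure of $\A_{\mathrm{gen}\leq\emptyset'}$ is a theorem of Haught: every non-computable set reducible to a $1$-generic of degree $\leq_T\emptyset'$ is itself of $1$-generic degree.

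For the splitting property I would construct $G\leq_T\emptyset'$ by a finite-extension argument recursive in $\emptyset'$ and take $h=f\oplus G$, so $h\leq_T\emptyset'$ and $h\geq_T f$ for free. The construction interleaves two families of requirements. Genericity requirements, using $\emptyset'$ as oracle, decide for each $\Sigma^{0,f}_1$-set $W$ of strings whether the current condition has an extension in $W$, and either take such an extension or block further extensions from entering $W$; this makes $G$ $1$-generic over $f$, in particular giving $G\not\leq_T f$ and hence $h>_T f$. For each $g_i$, a Posner--Robinson-style coding requirement arranges $g_i\oplus h\equiv_T\emptyset'$: at the relevant stage $\emptyset'$ picks one of two candidate extensions in a way whose decoding requires knowledge of $g_i$, the coded bits being placed on positions reserved in advance and disjoint from those used for the genericity extensions.

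Lemma~\ref{lem-gen-rel}(ii) then yields $h\in\A_{\mathrm{low}}$ when $f$ is low, and Lemma~\ref{lem-gen-rel}(i) yields $h\in\A_{\mathrm{gen}\leq\emptyset'}$ when $f$ is $1$-generic; when $f$ is computable, $h\equiv_T G$ is itself $1$-generic. On the other hand, $g_i\oplus h\equiv_T\emptyset'$ forces $g_i\oplus h\notin\aa$: for $\A_{\mathrm{low}}$ because $(g_i\oplus h)'\equiv_T\emptyset''>_T\emptyset'$; for $\A_{\mathrm{gen}\leq\emptyset'}$ because $\emptyset'$ is not of $1$-generic degree, as every $1$-generic set $Y$ satisfies $Y'\equiv_T Y\oplus\emptyset'$, so $Y\equiv_T\emptyset'$ would give the contradiction $\emptyset''\equiv_T\emptyset'$. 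The main obstacle is coordinating the Posner--Robinson coding with the genericity over $f$ in a single construction; this is handled in the usual fashion by separating the positions used for coded bits from those used for genericity extensions, and verifying that the Posner--Robinson two-branch choice does not obstruct the meet-or-avoid decisions for genericity.
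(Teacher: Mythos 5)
Your overall plan coincides with the paper's: verify the structural conditions, reduce to Proposition~\ref{prop-split-equiv}(ii), build $G \leq_T \emptyset'$ that is $1$-generic over $f$, set $h = f\oplus G$, invoke Lemma~\ref{lem-gen-rel} to keep $h$ in the class, and use Posner--Robinson cupping to force $g_i\oplus h\equiv_T\emptyset'$, hence $g_i\oplus h\notin\aa$. The countability and downward-closure checks, the appeal to Haught, and the final degree computations are all correct; your observation that $1$-genericity of $G$ over $f$ already yields $G\not\leq_T f$ is in fact slightly cleaner than the paper, which obtains $C\not\leq_T A$ indirectly by adjoining an auxiliary low $B_0>_T A$ to $\B$.

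The gap is in the description of the coding, which is where all the real work sits. ``The coded bits being placed on positions reserved in advance and disjoint from those used for the genericity extensions'' cannot be right as stated: if the positions of the coded bits were fixed in advance, then $h$ alone would decode $\emptyset'$, so $h\geq_T\emptyset'$, destroying the lowness (or $1$-genericity) of $h$ that you need. The point of Posner--Robinson is that the location of each coding decision is determined dynamically by a search that only $g_i$ can bound. Concretely: since $g_i\leq_T\emptyset'$ and $g_i\not\leq_T f$, the computation function $\alpha_i$ of a computable approximation to $g_i$ satisfies $\alpha_i\equiv_T g_i$ and is dominated by no $f$-computable function (Lemma 6 of Posner--Robinson); at stage $e$ one searches for the least $m$ such that either every extension of $\sigma_e\star\tau_m$ forces divergence of the $e$-th $\Sigma^{0,f}_1$ event or some extension of length at most $\alpha_i(m)$ realizes it, and the non-domination is exactly what guarantees such an $m$ exists and what allows $g_i\oplus h$ --- but not $h$ alone --- to reconstruct the construction and read off the appended bit $\emptyset'(e)$. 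Note also that this single step is simultaneously the genericity requirement, so the two families you propose to interleave collapse into one; the paper handles all $g_i$ at once by taking $\alpha=\min(\alpha_1,\dots,\alpha_k)$, though running a separate such requirement for each $g_i$ as you suggest would also work. Without the domination argument, the assertion that ``decoding requires knowledge of $g_i$'' is unsupported, and this is the only place where the hypothesis $g_i\not\leq_T f$ actually enters the construction.
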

\begin{proof}
The first class is clearly downwards closed; for the second class this is proved in Haught \cite{haught-1986} (but also follows from the fact mentioned above that $\A_{\mathrm{gen}\leq\emptyset'}$ consists of exactly those functions which are low for EX).

First, we consider the class of low functions. By Proposition \ref{prop-split-equiv}, we can show that the low functions form a splitting class by proving that for every low $A$ and every finite $\B \subseteq \{B \in \omega^\omega \mid B\text{ low and } B \not\leq_T A\}$ there exists a set $C \not\leq_T A$ such that $A \oplus C$ is low and such that for all $B \in \B$ we have that $B \oplus (A \oplus C) \equiv_T \emptyset'$. (Note that $C \not \leq_T A$ ensures that $A \oplus C >_T A$, while $B \oplus (A \oplus C) \equiv_T \emptyset'$ ensures that $B \oplus (A \oplus C)$ is neither 1-generic nor low.) Lemma \ref{lem-gen-rel} tells us that we can make $A \oplus C$ low by ensuring that $C \leq \emptyset'$ and that $C$ is 1-generic over $A$. Thus, it is enough if we can show:
\begin{center}
\refstepcounter{equation}(\arabic{equation})\label{low-claim}
\emph{If $A$ is low and $\B \subseteq \{B \in \omega^\omega \mid B \leq_T \emptyset' \text{ and } B \not\leq_T A\}$ is finite, then there exists a set $C \leq_T \emptyset'$ which is 1-generic over $A$ such that $C \not\leq_T A$ and for all $B \in \B$: $B \oplus (A \oplus C) \equiv_T \emptyset'$.}
\end{center}

In fact, we then also immediately get the result for the class of functions of 1-generic degree below $\emptyset'$. 
Namely, let $A \leq_T \emptyset'$ be of 1-generic degree and let $\B  \subseteq \{B \in \omega^\omega \mid B \leq_T \emptyset' \text{ and } B \not\leq_T A\}$ be finite. Just as above, it would be enough to have a set $C \leq_T \emptyset'$ such that $C \not\leq_T A$, $A \oplus C$ is of 1-generic degree and for all $B \in \B$: $B \oplus (A \oplus C) \equiv_T \emptyset'$. Note that this expression is invariant under replacing $A$ with a Turing equivalent set, so because $A$ is of 1-generic degree we may without loss of generality assume $A$ to be 1-generic. Then, because $A \leq \emptyset'$ is 1-generic, it is also low. So we can find a set $C$ as in \eqref{low-claim}. By Lemma \ref{lem-gen-rel} we then have that $A \oplus C$ is 1-generic, and therefore $C$ is exactly as desired.

To prove \eqref{low-claim} we modify the proof of the Posner and Robinson Cupping Theorem \cite{posner-robinson-1981}. Let $\B = \{B_1,\dots,B_k\}$. For every $B_i \in \B$, since $B_i \leq \emptyset'$ we can approximate $B_i$ by a computable sequence $B^0_i,B^1_i,\dots$ of finite sets. We now let $\alpha_i$ be the \emph{computation function} defined by letting $\alpha_i(n)$ be the least $m \geq n$ such that $B^m_i \restriction (n+1) = B_i \restriction (n +1)$. Then $B_i \equiv_T \alpha_i$. Now let $\alpha = \min(\alpha_1,\dots,\alpha_k)$. Then, by Lemma 6 of \cite{posner-robinson-1981}, any function $g$ which dominates $\alpha$ computes some $B_i$. Thus, we see that no function computable in $A$ can dominate $\alpha$.

We will now construct a set $C$ as in \eqref{low-claim} by a finite extension argument, i.e.\ as $C = \bigcup_{n \in \omega} \sigma_n$. Fix any computable sequence $\tau_0,\tau_1,\dots$ of mutually incomparable finite strings (for example, $\tau_n =  \langle 0^n1 \rangle$, the string consisting of $n$ times a $0$ followed by a $1$). We start with $\sigma_0 = \emptyset$. To define $\sigma_{e+1}$ given $\sigma_e$, let $n$ be the least $m \in \omega$ such that either (where the quantifiers are over finite strings):
\begin{equation}\label{pos-rob-1}
\forall \sigma \supseteq \sigma_e \star \tau_m\left(\{e\}^{A \oplus \sigma}(e) \uparrow\right)
\end{equation}
or
\begin{equation}\label{pos-rob-2}
\exists \sigma \supseteq \sigma_e \star \tau_m \left(|\sigma| \leq \alpha(m) \wedge \{e\}^{A \oplus \sigma}(e)[|\sigma|] \downarrow\right).
\end{equation}
Such an $m$ exists: otherwise, for every $l \in \omega$ we could let $\beta(l)$ be the least $s \in \omega$ such that 
\[\exists \sigma \supseteq \sigma_e \star \tau_l\left(\{e\}^{A \oplus \sigma}(e)[|\sigma|] \downarrow \wedge |\sigma| = s\right).\]
For every $l$ such an $s$ exists because \eqref{pos-rob-1} does not hold for $l$, while such an $s$ has to be strictly bigger than $\alpha(l)$ because \eqref{pos-rob-2} also does not hold. So, $\beta$ would be a function computable in $A$ which dominates $\alpha$, of which we have shown above that it cannot exist.

Now, if case \eqref{pos-rob-1} holds for $n$, then we let $\sigma_{e+1} = \sigma_e \star \tau_n \star \emptyset'(e)$. Otherwise, we let $\sigma_{e+1} = \sigma \star \emptyset'(e)$, where $\sigma$ is the least $\sigma$ such that \eqref{pos-rob-2} is satisfied.

The construction is computable in $A' \oplus B_1 \oplus \dots \oplus B_k \leq_T \emptyset'$:  the set of $m \in \omega$ for which \eqref{pos-rob-1} holds is co-c.e.\ in $A$, while for \eqref{pos-rob-2} this is computable in $\alpha \leq_T B_1 \oplus \dots \oplus B_k$ and $A$. Therefore, $C \leq_T \emptyset'$ holds.

Furthermore, per construction of $\sigma_{e+1}$ we have either $\{e\}^{A \oplus \sigma_{e+1}}(e) \downarrow$, or for all $\tau \supseteq \sigma_{e+1}$ we have $\{e\}^{A \oplus \tau}(e) \uparrow$. So, $C$ is 1-generic over $A$.

Next, for every $1 \leq i \leq k$ the construction is computable in $(A \oplus C) \oplus B_i$: to determine $\sigma_{e+1}$ given $\sigma_e$, use $C$ to find the unique $n \in \omega$ such that $C \supseteq \sigma_e \star \tau_n$. We can now compute in $A$ and $B_i$ if there exists some string $\sigma \supseteq \sigma_e \star \tau_n$ of length at most $\alpha_i(n)$ such that $\{e\}^{A \oplus \sigma}(e)[|\sigma|] \downarrow$: if so, let $\sigma$ be the least such string and then $\sigma_{e+1} = B \restriction |\sigma| + 1$. Otherwise, $\sigma_{e+1} = B \restriction |\sigma_e| + 1$. Then we also see that $\emptyset'$ is computable in $(A \oplus C) \oplus B_i$, because $\emptyset'(e)$ is the last element of $\sigma_{e+1}$. Since also $A, B_i, C \leq_T \emptyset'$ we see that $(A \oplus C) \oplus B_i \equiv_T \emptyset'$.

Finally, because for every low $A$ there exists some low $B_0 >_T A$ (see e.g.\ Odifreddi \cite[Proposition V.2.21]{odifreddi-1989}), we may without loss of generality assume that such a $B_0$ is in $\B$. Then we have $B_0 \oplus (A \oplus C) \equiv_T \emptyset'$, as shown above. Now, if it were the case that $C \leq_T A$, then $\emptyset' \equiv_T B_0 \oplus (A \oplus C) \equiv_T B_0$, which contradicts $B_0$ being low. So $C \not\leq_T A$, which is the last thing we needed to show.
\end{proof}

\section{The theory of a splitting class}\label{sec-ipc}

We will now show that the theory of a splitting class equals IPC. We start by moving away from our algebraic viewpoint to Kripke semantics. The crucial step we need for this is the following:

\begin{thm}
For any poset $(X,\leq)$, the theory of $(X,\leq)$ as a Kripke frame is the same as theory of the lattice of upsets of $X$ as a Brouwer algebra.
\end{thm}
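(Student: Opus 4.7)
The plan is to prove the stronger statement that for a fixed valuation, the ``denotation'' of a formula in the two semantics coincides, and then to derive the equality of theories as an immediate consequence. Recall that a Kripke valuation on $(X,\leq)$ assigns to each propositional variable an upset $V(p) \in \oo(X)$, with forcing defined by: $x \Vdash p$ iff $x \in V(p)$; $x \Vdash \phi \wedge \psi$ iff $x \Vdash \phi$ and $x \Vdash \psi$; $x \Vdash \phi \vee \psi$ iff $x \Vdash \phi$ or $x \Vdash \psi$; $x \Vdash \phi \to \psi$ iff for all $y \geq x$, $y \Vdash \phi$ implies $y \Vdash \psi$; and $x \not\Vdash \bot$. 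A formula is in $\Th(X,\leq)$ when it is forced at every $x$ under every valuation. On the algebraic side, the same map $V$ is a Brouwer-algebra valuation into $\oo(X)$; remember that in $\oo(X)$ ordered by reverse inclusion, the zero is $X$ and the one is $\emptyset$, and $\oplus, \otimes$ are $\cap, \cup$ respectively.

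The main step is a routine induction on formula complexity establishing the following: for every formula $\phi$ and every Kripke/algebraic valuation $V$, if $\llbracket \phi \rrbracket_V$ denotes the element of $\oo(X)$ obtained by evaluating $\phi$ in $\oo(X)$ as a Brouwer algebra, then
\[\llbracket \phi \rrbracket_V = \{x \in X \mid x \Vdash_V \phi\}.\]
The base case holds by definition of $V$. The conjunction and disjunction cases are immediate from the facts that $\oplus = \cap$ and $\otimes = \cup$, together with the clauses for $\wedge$ and $\vee$ in Kripke semantics. The $\bot$-case is trivial since both sides are $\emptyset$. The implication case is the only one where one needs to check that the definitions really align: by the formula for $U \to V$ in $\oo(X)$ given above, $x \in \llbracket \phi \rrbracket_V \to \llbracket \psi \rrbracket_V$ means that for every $y \geq x$ with $y \in \llbracket \phi \rrbracket_V$ we have $y \in \llbracket \psi \rrbracket_V$, which by the induction hypothesis is exactly the Kripke clause $x \Vdash_V \phi \to \psi$.

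Given the lemma, the theorem follows immediately. A formula $\phi$ belongs to $\Th(\oo(X))$ iff for every Brouwer-algebra valuation $V$ we have $\llbracket \phi \rrbracket_V = 0 = X$, iff for every upset-valuation $V$ and every $x \in X$ we have $x \Vdash_V \phi$, iff $\phi \in \Th(X,\leq)$. The only point that requires care is bookkeeping the fact that Kripke valuations into $\oo(X)$ are the same thing as Brouwer-algebra valuations into $\oo(X)$, which is automatic. The induction itself is routine; the main obstacle, if any, is just psychological: one must be attentive to the order-reversal in $\oo(X)$ so that ``validity'' on the algebraic side (evaluating to $0$) correctly matches ``forcing at every point'' on the Kripke side (the denotation equalling all of $X$). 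Once this is straightened out, the inductive clause for $\to$ is precisely the formula for implication in $\oo(X)$, so no additional work is needed.
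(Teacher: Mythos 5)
Your proof is correct and is exactly the standard argument: the paper itself does not prove this theorem but defers to Chagrov and Zakharyaschev (Theorem 7.20, stated there for Heyting algebras), and the induction you give---showing the algebraic denotation $\llbracket \phi \rrbracket_V$ in $\oo(X)$ coincides with the set of points forcing $\phi$, with the implication clause matching the explicit formula for $\to$ in $\oo(X)$---is precisely the proof found in that reference. The only blemish is the notational clash between your valuation $V$ and the second argument of $U \to V$, which is harmless.
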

\begin{proof}
See e.g.\ Chagrov and Zakharyaschev \cite[Theorem 7.20]{chagrov-zakharyaschev-1997} for the order-dual result for Heyting algebras.
\end{proof}

\begin{prop}\label{prop-much-to-kripke}
Let $\aa \subseteq \omega^\omega$ be downwards closed under Turing reducibility. Then $\mw / \overline{\aa}$ (i.e.\ $\mw$ modulo the principal filter generated by $\overline{\aa}$) is isomorphic to the lattice of upsets $\oo(\aa)$ of $\aa$. In particular, $\Th(\mw / \overline{\aa}) = \Th(\aa)$ (the first as Brouwer algebra, the second as Kripke frame).
\end{prop}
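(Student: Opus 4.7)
The plan is to reduce everything, via Proposition~\ref{prop-upw-iso}, to a general fact about quotients of upset lattices by principal filters. Under the isomorphism $\mw \cong \oo(\dd)$ given by $\B \mapsto C(\B)$, the mass problem $\overline{\aa}$ corresponds to $C(\overline{\aa})$; since $\aa$ is $\leq_T$-downward closed, $\overline{\aa}$ is $\leq_T$-upward closed, so $C(\overline{\aa})$ can be identified with the upset $U := \dd \setminus \aa$ of $\dd$ (treating $\aa$ as a subset of $\dd$, as it is closed under $\equiv_T$). Hence $\mw/\overline{\aa} \cong \oo(\dd)/U$, and moreover $\oo(\dd \setminus U) = \oo(\aa)$, since upsets of the $\leq_T$-preordered set $\aa$ are the same as upsets of its quotient poset.

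The core step is then a general lemma: for any poset $(X, \leq)$ and any upset $U \subseteq X$, one has $\oo(X)/U \cong \oo(X \setminus U)$. As already observed in the paper, $\oo(X)/U$ is isomorphic to the interval $[0,U]_{\oo(X)}$, which under reverse inclusion consists of those $Y \in \oo(X)$ with $Y \supseteq U$. The correspondence
\[Y \longmapsto Y \setminus U, \qquad Z \longmapsto Z \cup U\]
is a bijection, since $U$ being an upset implies that $Y \setminus U$ is an upset of $X \setminus U$ and conversely that $Z \cup U$ is an upset of $X$. Order preservation in both directions is immediate from the observation that, among upsets of $X$ containing $U$, reverse inclusion agrees with reverse inclusion after restricting to $X \setminus U$.

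Finally, I would invoke the result (already cited in the proof of Proposition~\ref{prop-upw-iso}) that order isomorphisms between Brouwer algebras are automatically Brouwer-algebra isomorphisms, to conclude that $\mw/\overline{\aa} \cong \oo(\aa)$ as Brouwer algebras. The ``in particular'' statement on theories then follows from the theorem immediately preceding the proposition, which equates the Kripke-frame theory of $(\aa, \leq_T)$ with the Brouwer-algebra theory of $\oo(\aa)$. The only nontrivial bookkeeping in the entire argument is verifying that the equivalence relation induced by the principal filter generated by $U$ really is $Y \sim Y' \Leftrightarrow Y \cup U = Y' \cup U$; this is unpacked from the identity $[Y] = [Y \otimes U]$ stated in the proposition defining $B/\F$, once one notes that $\otimes = \cup$ in $\oo(X)$.
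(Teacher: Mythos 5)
Your proposal is correct and follows essentially the same route as the paper: transport $\overline{\aa}$ along the isomorphism $\mw \cong \oo(\dd)$ of Proposition~\ref{prop-upw-iso}, identify the quotient by the principal filter with the interval $[0,\overline{\aa}]$, and map that interval onto $\oo(\aa)$ via $\B \mapsto \B \cup \overline{\aa}$, finishing with the theorem relating upset algebras to Kripke frames. The only difference is that you spell out more of the bookkeeping (the inverse bijection $Y \mapsto Y\setminus U$ and the description of the filter-induced equivalence) that the paper leaves as ``easily seen''.
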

\begin{proof}
By Proposition \ref{prop-upw-iso}, $\mw$ is isomorphic to the lattice of upsets $\oo(\dd)$ of the Turing degrees $\dd$, by sending each set $\B \subseteq \omega^\omega$ to $C(\B)$. Since $\overline{\aa}$ is upwards closed, we see that the isomorphism sends $\overline{\aa}$ to itself. Therefore, $\mw / \overline{\aa}$, which is isomorphic to the initial segment $[\omega^\omega,\overline{\aa}]_\mw$ of $\mw$, is isomorphic to the initial segment $[\omega^\omega,\overline{\aa}]_{\oo(\dd)}$. Finally, $[\omega^\omega,\overline{\aa}]_{\oo(\dd)}$ is easily seen to be isomorphic to $\oo(\aa)$, by sending each set $\B \in \oo(\aa)$ to $\B \cup \overline{\aa}$. The result now follows from the previous theorem.
\end{proof}

Thus, if we take the factor of $\mw$ given by the principal filter generated by $\overline{\aa}$, we get exactly the theory of the Kripke frame $(\aa,\leq_T)$. The rest of this section will be used to show that for splitting classes this theory is exactly IPC. To this end, we need the right kind of morphisms for Kripke frames, called \emph{p-morphisms}.

\begin{defi}{\rm (De Jongh and Troelstra~\cite{dejongh-troelstra-1966})}
Let $(X_1,\leq_1)$, $(X_2,\leq_2)$ be Kripke frames. A surjective function $f: (X_1,\leq_1) \to (X_2,\leq_2)$ is called a \emph{p-morphism} if
\begin{enumerate}
\item $f$ is an order homomorphism: $x \leq_1 y \to f(x) \leq_2 f(y)$,
\item $\forall x \in X_1 \forall y \in X_2 (f(x) \leq_2 y \to \exists z \in X_1 (x \leq_1 z \wedge f(z) = y))$.
\end{enumerate}
\end{defi}

\begin{prop}\label{prop-p-morphism}
If there exists a $p$-morphism from $(X_1,\leq_1)$ to $(X_2,\leq_2)$, then $\Th(X_1,\leq_1) \subseteq \Th(X_2,\leq_2)$.
\end{prop}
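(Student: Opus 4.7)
The plan is to carry out the standard truth-pullback argument from intuitionistic Kripke semantics. Fix a $p$-morphism $f\colon(X_1,\leq_1)\to(X_2,\leq_2)$ and suppose $\phi\in\Th(X_1,\leq_1)$. Given an arbitrary Kripke valuation $V_2$ on $X_2$ (assigning to each propositional variable an upset of $X_2$), pull it back along $f$ to a valuation $V_1$ on $X_1$ by setting $V_1(p):=f^{-1}(V_2(p))$. Condition (1) in the definition of a $p$-morphism guarantees that $V_1(p)$ is an upset of $X_1$ whenever $V_2(p)$ is an upset of $X_2$, so $V_1$ is a legitimate valuation.

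The heart of the proof is the pullback lemma: for every propositional formula $\psi$ and every $x\in X_1$,
\[
x\Vdash_{V_1}\psi \iff f(x)\Vdash_{V_2}\psi.
\]
I would prove this by induction on the complexity of $\psi$. The atomic case is immediate from the definition of $V_1$, and the cases $\bot$, $\wedge$, $\vee$ reduce straightforwardly to the induction hypothesis via the clauses of the Kripke forcing relation. The only interesting case is implication $\psi\to\chi$. The direction $x\Vdash_{V_1}\psi\to\chi \Rightarrow f(x)\Vdash_{V_2}\psi\to\chi$ is where condition (2) enters crucially: given any $y\geq_2 f(x)$ with $y\Vdash_{V_2}\psi$, lift $y$ to some $z\geq_1 x$ with $f(z)=y$ and apply the induction hypothesis twice. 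The reverse direction uses only condition (1): any $z\geq_1 x$ forcing $\psi$ under $V_1$ satisfies $f(z)\geq_2 f(x)$ and $f(z)\Vdash_{V_2}\psi$ by induction, hence $f(z)\Vdash_{V_2}\chi$, and finally $z\Vdash_{V_1}\chi$ by a second use of induction. This is the main (though standard) subtlety of the argument.

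To conclude $\phi\in\Th(X_2,\leq_2)$, fix any $y\in X_2$ and any valuation $V_2$ on $X_2$. Using surjectivity of $f$ — the third ingredient of being a $p$-morphism — pick $x\in X_1$ with $f(x)=y$. Then $x\Vdash_{V_1}\phi$ because $\phi\in\Th(X_1,\leq_1)$, so the pullback lemma gives $y\Vdash_{V_2}\phi$. Since $V_2$ and $y$ were arbitrary, $\phi\in\Th(X_2,\leq_2)$, as required.
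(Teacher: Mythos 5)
Your proof is correct and complete; the pullback of the valuation along $f$, the induction on formula complexity with condition (2) used in the forward direction of the implication case, and surjectivity used at the end are exactly the standard argument. The paper itself does not spell this out but merely cites Chagrov and Zakharyaschev, and your argument is precisely the proof given there, so this matches the paper's (referenced) approach.
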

\begin{proof}
See e.g.\ Chagrov and Zakharyaschev \cite[Corollary 2.17]{chagrov-zakharyaschev-1997}.
\end{proof}

\begin{thm}\label{thm-tree-ipc}
$\Th(2^{<w}) = \mathrm{IPC}$.
\end{thm}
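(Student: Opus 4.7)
The plan is to prove the two inclusions separately. The inclusion $\mathrm{IPC} \subseteq \Th(2^{<\omega})$ is immediate from results already in the paper: by the first theorem of this section, $\Th(2^{<\omega})$ coincides with the Brouwer-algebra theory of $\oo(2^{<\omega})$, and the McKinsey--Tarski soundness proposition from Section~\ref{sec-muchnik} puts IPC inside any such theory.

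For the reverse inclusion I appeal to the classical Kripke completeness theorem for finite rooted trees (see e.g.\ Chagrov--Zakharyaschev \cite{chagrov-zakharyaschev-1997}): every $\phi \notin \mathrm{IPC}$ is refutable by some valuation on some finite rooted tree $(T,\leq)$. By Proposition~\ref{prop-p-morphism}, it is then enough to produce, for each such $T$, a surjective p-morphism $f : (2^{<\omega},\subseteq) \twoheadrightarrow (T,\leq)$; such an $f$ forces $\Th(2^{<\omega}) \subseteq \Th(T)$ and hence $\phi \notin \Th(2^{<\omega})$.

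I plan to build $f$ by recursion on the length of $\sigma$, using a dovetailing device. Alongside $f(\sigma)$ I maintain a finite FIFO queue of outstanding \emph{goals} --- elements $t' \in T$ with $t' \geq f(\sigma)$ still to be realised above $\sigma$. Start with $f(\emptyset)$ equal to the root of $T$ and the queue listing all of $T$. At a node $\sigma$ with $f(\sigma) = t$, enqueue any newly available goals in the upward cone of $t$ that have not yet been scheduled, then pop the head goal $t^\ast$: if $t^\ast = t$, discharge it; otherwise set $f(\sigma \star 0)$ to be the next node on the unique $T$-path from $t$ towards $t^\ast$, set $f(\sigma \star 1) = t$, and pass the remaining queue to both children. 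Monotonicity is built into the construction; surjectivity follows from the back-condition applied at $\sigma = \emptyset$; and the back-condition itself --- that whenever $f(\sigma) \leq t'$ some $\tau \supseteq \sigma$ has $f(\tau) = t'$ --- reduces to the statement that every goal which ever enters the queue is eventually serviced.

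The main obstacle is exactly this last verification. Because the head goal advances one $T$-edge per level along the ``$0$'' branch and $T$ has finite height, the head goal is reached within finitely many steps once it becomes the head; FIFO scheduling together with finiteness of $T$ then ensures that every goal eventually reaches the head of the queue. The bookkeeping is essentially routine but is the single non-trivial point of the argument, and the rest of the proof is simply a check that the constructed $f$ satisfies the two clauses of the definition of p-morphism.
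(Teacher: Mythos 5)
The paper does not actually prove this theorem: it simply cites Chagrov and Zakharyaschev (Corollary 2.33), so you are supplying an argument where the paper gives only a reference. Your overall strategy --- soundness via the Brouwer-algebra/Kripke correspondence, completeness via the finite-tree completeness of IPC together with a surjective p-morphism from $2^{<\omega}$ onto each finite rooted tree $T$, then Proposition~\ref{prop-p-morphism} --- is the standard textbook route and is correct in outline.

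However, your p-morphism construction has a genuine gap, and it sits exactly where you locate the ``single non-trivial point''. The back condition is not the statement that every goal which ever enters the queue is eventually serviced \emph{somewhere}; it is the statement that for \emph{every} node $\sigma$ and every $t' \geq f(\sigma)$ there is some $\tau \supseteq \sigma$ with $f(\tau) = t'$. Since $f$ takes the value $f(\sigma)$ at infinitely many nodes (e.g.\ along the all-$1$s spine, where your construction keeps the value fixed), every $t' \geq f(\sigma)$ must be realised above each of these nodes separately, hence infinitely often in $2^{<\omega}$. Your bookkeeping marks a goal as ``scheduled'' globally and services it on a single branch: once $t'$ has been dispatched along $\sigma \star 0$, it is absent from the queue handed to $\sigma \star 1$ and, being already scheduled, is never re-enqueued, so no $\tau \supseteq \sigma \star 1$ ever has $f(\tau) = t'$. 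Concretely, take $T$ to be the two-element chain $r < a$: your construction realises $a$ above exactly one node and leaves $f \equiv r$ on an entire subtree, violating the back condition there. (A smaller defect: the queue passed to the $0$-child may contain goals no longer above the $0$-child's value, for which ``the next node on the path towards $t^\ast$'' is undefined.) The repair is standard: make the goal list local to each node and re-seed it forever, e.g.\ at each $\sigma$ with $f(\sigma) = t$ having immediate successors $t_1, \dots, t_m$ in $T$, set $f(\sigma \star 1^i) = t$ for all $i$ and $f(\sigma \star 1^i \star 0) = t_{(i \bmod m) + 1}$, and recurse above each $\sigma \star 1^i \star 0$; the back condition then follows by induction on the height of $t'$ over $t$. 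With that replacement your proof goes through.
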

\begin{proof}
See e.g.\ Chagrov and Zakharyaschev \cite[Corollary 2.33]{chagrov-zakharyaschev-1997}.
\end{proof}

So, if we want to show that the theory of $\mw / \overline{\aa}$ equals IPC, it is enough to show that there exists a $p$-morphism from $\aa$ to $2^{<\omega}$. We next show that this is indeed possible for splitting classes.

\begin{prop}\label{prop-split-tree}
Let $\aa$ be a splitting class. Then there exists a $p$-morphism $\alpha: (\aa,\leq_T) \to 2^{<\omega}$.
\end{prop}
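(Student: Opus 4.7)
Plan. We construct $\alpha$ stage by stage, building up a finite labelled set $F \subseteq \aa$ together with $\alpha : F \to 2^{<\omega}$ and inserting explicit back-condition witnesses as new elements of $F$ along the way. Fix a computable $g_0 \in \aa$ (such $g_0$ exists because $\aa$ is non-empty and downwards closed under $\leq_T$) and set $\alpha(g_0) := \epsilon$. Enumerate $\aa = \{f_n : n \in \omega\}$ and $\omega \times 2^{<\omega}$ with each pair occurring infinitely often. At each stage we perform one of two actions. \emph{Type A:} if the next $f_n$ in the enumeration is not yet in $F$, add it to $F$ and set $\alpha(f_n) := \sigma^- := \max\{\alpha(g) : g \in F, g \leq_T f_n\}$. \emph{Type B:} for the current pair $(n, \tau)$, provided $f_n \in F$ and $\alpha(f_n) \subseteq \tau$, write $\tau = \alpha(f_n) \star b_1 \cdots b_k$ and iteratively build $f_n = w_0 <_T w_1 <_T \cdots <_T w_k$ in $\aa$: given $w_i$, apply Proposition~\ref{prop-split-equiv}(ii) to $w_i$ with $\B_i := \{g \in F : \alpha(g) \not\subseteq \alpha(f_n) \star b_1 \cdots b_{i+1}\}$ to obtain $w_{i+1} >_T w_i$ with $g \oplus w_{i+1} \notin \aa$ for every $g \in \B_i$, and set $\alpha(w_{i+1}) := \alpha(f_n) \star b_1 \cdots b_{i+1}$.

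The invariants to maintain are: (I1) $\alpha$ is order-preserving on $F$; (I2) if $\alpha(f)$ and $\alpha(g)$ are $\subseteq$-incomparable in $2^{<\omega}$, then $f \oplus g \notin \aa$; and (I3) for every $g \in F$ and every prefix $\rho \subseteq \alpha(g)$ there is some $g' \in F$ with $g' \leq_T g$ and $\alpha(g') = \rho$. At a type-B stage, (I1) gives $\B_i \subseteq \{g \in \aa : g \not\leq_T w_i\}$: if $g \leq_T w_i$, then $\alpha(g) \subseteq \alpha(w_i) = \alpha(f_n) \star b_1 \cdots b_i$, a prefix of $\alpha(f_n) \star b_1 \cdots b_{i+1}$, so $g \notin \B_i$. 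Hence the splitting-class property applies, and its conclusions preserve (I1) and (I2) routinely, since $g \oplus w_{i+1} \notin \aa$ for $g \in \B_i$ forbids both $g \leq_T w_{i+1}$ and $w_{i+1} \leq_T g$; and (I3) is preserved because the chain $f_n = w_0 <_T \cdots <_T w_{i+1}$ supplies the required predecessors on the newly occupied prefixes of $\alpha(w_{i+1})$.

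The main subtlety is checking (I2) at a type-A stage, where $f_n$ is given rather than constructed. Fix $g \in F$ with $\alpha(g)$ incomparable to $\sigma^-$; let $\rho$ be their longest common prefix, so $\sigma^- \supseteq \rho \star i$ and $\alpha(g) \supseteq \rho \star (1-i)$ for some $i \in \{0,1\}$. Invariant (I3) supplies $g'' \in F$ with $g'' \leq_T f_n$ and $\alpha(g'') = \rho \star i$, and $g''' \in F$ with $g''' \leq_T g$ and $\alpha(g''') = \rho \star (1-i)$. Since $\rho \star i$ and $\rho \star (1-i)$ are incomparable, the inductive (I2) gives $g'' \oplus g''' \notin \aa$; as $g'' \oplus g''' \leq_T f_n \oplus g$, downward closure of $\aa$ yields $f_n \oplus g \notin \aa$. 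The same argument applied to any pair of elements of $F$ below $f_n$ shows that $\{\alpha(g') : g' \in F, g' \leq_T f_n\}$ is a chain, so the maximum $\sigma^-$ exists.

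Since every $f_n$ is eventually added by a type-A stage and every pair $(n, \tau)$ is eventually processed by a type-B stage, the limit $\alpha : \aa \to 2^{<\omega}$ is defined on all of $\aa$, is order-preserving by (I1), is surjective (type-B processing of the pairs $(0, \tau)$ produces a witness labelled $\tau$ for each $\tau \in 2^{<\omega}$), and satisfies the back-condition by construction. Hence $\alpha$ is the desired $p$-morphism.
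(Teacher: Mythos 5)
Your overall architecture is the paper's: a stagewise construction of finite partial order homomorphisms into $2^{<\omega}$, with the invariant that incomparable labels force the join out of $\aa$, and with Proposition~\ref{prop-split-equiv}(ii) supplying the new witnesses. But there is a genuine gap in the type-B step: the side-condition set $\B_i$ is too small. You only demand $g \oplus w_{i+1} \notin \aa$ for those $g \in F$ whose label is \emph{not} a prefix of the target label $\alpha(f_n)\star b_1\cdots b_{i+1}$. This leaves completely uncontrolled every $g \in F$ with $g \not\leq_T w_i$ whose label \emph{is} a proper prefix of the target --- and such $g$ exist, because distinct degrees in $F$ may share a label. Concretely: start with $g_0$ labelled by the empty string, add $f_1$ and then some $f_2 >_T f_1$ by type-A stages (both receive the empty label, since the maximum is taken over labels of elements below them), then run type B on the pair $(1,\langle 0\rangle)$. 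Here $\B_0 = \emptyset$, so Proposition~\ref{prop-split-equiv}(ii) gives you merely \emph{some} $w_1 >_T f_1$ in $\aa$; nothing prevents $w_1 \leq_T f_2$ (indeed $w_1 \equiv_T f_2$ is possible). Then $w_1 \leq_T f_2$ but $\alpha(w_1) = \langle 0\rangle \not\subseteq \alpha(f_2)$, and invariant (I1) --- hence the order-homomorphism property of the limit $\alpha$ --- fails. Your justification (``$g \oplus w_{i+1} \notin \aa$ for $g \in \B_i$ forbids both $g \leq_T w_{i+1}$ and $w_{i+1} \leq_T g$'') only speaks about $g \in \B_i$ and is silent about exactly these problematic elements.

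The repair is what the paper does: take $\B_i := \{g \in F : g \not\leq_T w_i\}$, which is finite and still contained in the set that Proposition~\ref{prop-split-equiv}(ii) permits. Then every old $g$ either lies below $w_i$, in which case (I1) gives $\alpha(g) \subseteq \alpha(w_i) \subseteq \alpha(w_{i+1})$ and no clash with (I1) or (I2) can occur, or satisfies $g \oplus w_{i+1} \notin \aa$, which forbids comparability with $w_{i+1}$ in both directions. With that single change your argument goes through; the remaining ingredients --- the type-A step with $\sigma^-$ the maximum of a chain of labels, the use of (I3) to verify (I2) for a newly inserted $f_n$, and realising the two children of a node at separate type-B stages so that only clause (ii) of Proposition~\ref{prop-split-equiv} (rather than the full two-witness splitting property) is ever invoked --- are correct and match the paper's proof in substance.
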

\begin{proof}
Instead of building a $p$-morphism from $\aa$, we will build it from $\aa / {\equiv_T}$ (which is equivalent to building one from $\aa$, since any order homomorphism has to send $T$-equivalence classes to equal strings). For ease of notation we will write $\aa$ for $\aa / \equiv_T$ during the remainder of this proof.

Fix an enumeration $\ba_0,\ba_1,\dots$ of $\aa$. We will build a sequence $\alpha_0 \subseteq \alpha_1 \subseteq \dots$ of finite, partial order homomorphisms from $\aa$ to $2^{<\omega}$, which additionally satisfy that if $\ba,\bb \in \dom(\alpha_i)$ and $\alpha_i(\ba) \mid \alpha_i(\bb)$, then $\ba \oplus \bb \not\in \aa$.

We satisfy the following requirements:
\begin{itemize}
\item $R_0$: $\alpha_0(\bzero) = \emptyset$ (where $\bzero$ is the least Turing degree)
\item $R_{2n+1}$: $\ba_n \in \dom(\alpha_{2n+1})$
\item $R_{2n+2}$: there are $\bc_0,\bc_1 \in \dom(\alpha_{2n+2})$ with $\bc_0,\bc_1 \geq_T \ba_n$ and $\alpha_{2n+2}(\bc_0) = \alpha_{2n+1}(\ba_n) \star 0$, $\alpha_{2n+2}(\bc_1) = \alpha_{2n+1}(\ba_n) \star 1$.
\end{itemize}

First, we show that for such a sequence the function $\alpha = \bigcup_{n \in \omega} \alpha_n$ is a  $p$-morphism $\alpha: (\aa ,\leq_T) \to 2^{<\omega}$. First, the odd requirements ensure that $\alpha$ is total. Furthermore, $\alpha$ is an order homomorphism because the $\alpha_i$ are. To show that $\alpha$ is a $p$-morphism, let $\ba \in \aa $ and let $\alpha(\ba) \subseteq y$; we need to find some $\ba \leq_T \bb \in \aa $ such that $\alpha(\bb) = y$. Because $\alpha(\ba) \subseteq y$ we know that $y = \alpha(\ba) \star y'$ for some finite string $y'$. We may assume $y'$ to be of length 1, the general result then follows by induction. Now, if we let $n \in \omega$ be such that $\ba = \ba_n$ then $\ba_n \in \dom(\alpha_{2n+1})$, so requirement $R_{2n+2}$ tells us that there are functions $\bc_0,\bc_1 \geq \ba_n$ with $\alpha_{2n+2}(\bc_0) = \alpha(\ba) \star 0$ and $\alpha_{2n+2}(\bc_1) = \alpha(\ba) \star 1$. Now either $\alpha(\bc_0) = y$ or $\alpha(\bc_1) = y$, which is what we needed to show. That $\alpha$ is surjective directly follows from the fact that $\emptyset$ is in its range and that it satisfies property (2) of a $p$-morphism.

\bigskip
Now, we show how to actually construct the sequence. First, $\alpha_0$ is already defined. Next assume $\alpha_{2n}$ has been constructed, we will construct $\alpha_{2n+1}$ extending $\alpha_{2n}$ such that $\ba_n \in \dom(\alpha_{2n+1})$. The set
\[X := \{\alpha_{2n}(\bb) \mid \bb \in \dom(\alpha_{2n}) \text{ and } \bb \leq_T \ba_n\}\] is totally ordered under $\subseteq$. Since, if $\bb,\bc \leq_T \ba_n$ then $\bb \oplus \bc \leq_T \ba_n$. Now, if $\alpha_{2n}(\bb)$ and $\alpha_{2n}(\bc)$ are incomparable then we assumed that $\bb \oplus \bc \not\in \aa$. This contradicts the assumption that $\aa$ is downwards closed. So, we can define $\alpha_{2n+1}(\ba_n)$ to be the largest element of $X$.

We show that $\alpha_{2n+1}$ is an order homomorphism; we then also automatically know that it is well-defined. Thus, let $\bb_1,\bb_2 \in \dom(\alpha_{2n+1})$ with $\bb_1 \leq_T \bb_2$. If they are both already in $\dom(\alpha_{2n})$, then the induction hypothesis on $\alpha_{2n}$ already tells us that $\alpha_{2n+1}(\bb_1) \subseteq \alpha_{2n+1}(\bb_2)$. If $\bb_1 \in \dom(\alpha_{2n})$ and $\bb_2 = \ba_n$, then $\alpha_{2n}(\bb_1) \in X$, so by definition of $\alpha_{2n+1}(\ba_n)$ we directly see that $\alpha_{2n+1}(\bb_1) \subseteq \alpha_{2n+1}(\ba_n)$. Finally, we consider the case that $\bb_1 = \ba_n$ and $\bb_2 \in \dom(\alpha_{2n})$. To show that $\alpha_{2n+1}(\ba_n) \subseteq \alpha_{2n+1}(\bb_2) = \alpha_{2n}(\bb_2)$ it is enough to show that all elements of $X$ are below $\alpha_{2n}(\bb_2)$, because $\alpha_{2n+1}(\ba_n)$ is the largest element of the set $X$. Therefore, let $\bb \in \dom(\alpha_{2n})$ be such that $\bb \leq_T \ba_n$. Then we have that $\bb \leq_T \ba_n \leq_T \bb_2$, and since $\alpha_{2n}$ is an order homomorphism this implies that $\alpha_{2n}(\bb) \leq_T \alpha_{2n}(\bb_2)$, as desired.

Finally, we need to show that if $\bc \in \dom(\alpha_{2n})$ is such that $\alpha_{2n+1}(\bc)$ and $\alpha_{2n+1}(\ba_n)$ are incomparable, then $\bc \oplus \ba_n \not\in \aa$. If $\alpha_{2n+1}(\bc)$ and $\alpha_{2n+1}(\ba_n)$ are incomparable, there has to be some $\bb \leq_T \ba_n$ with $\bb \in \dom(\alpha_{2n})$ such that $\alpha_{2n}(\bc)$ and $\alpha_{2n}(\bb)$ are incomparable (because $\alpha_{2n+1}(\ba_n)$ is the largest element of $X$). However, then by induction hypothesis $\bb \oplus \bc \not\in \aa$ and because $\aa$ is downwards closed this also implies that $\bc \oplus \ba_n \not\in \aa$.

\bigskip
We now assume that $\alpha_{2n+1}$ has been defined and consider requirement $R_{2n+2}$. Let $\B = \{\bb \in \dom(\alpha_{2n+1}) \mid \bb \not\leq_T \ba_n\}$. Since $\aa$ is a splitting class there exist $\bc_0,\bc_1 \in \aa$ such that $\bc_0,\bc_1 \geq \ba_n$, $\bc_0 \oplus \bc_1 \not\in \aa$ and for all $\bb \in \B$ we have $\bb \oplus \bc_0, \bb \oplus \bc_1 \not\in \aa$. Now extend $\alpha_{2n+1}$ by letting $\alpha_{2n+2}(\bc_0) = \alpha_{2n+1}(\ba_n) \star 0$ and $\alpha_{2n+2}(\bc_1) = \alpha_{2n+1}(\ba_n) \star 1$.

First, we show that $\alpha_{2n+2}$ is an order homomorphism. Let $\bb_1,\bb_2 \in \dom(\alpha_{2n+2})$ and $\bb_1 \leq_T \bb_2$. We again distinguish several cases:
\begin{itemize}
\item $\bb_1,\bb_2 \in \dom(\alpha_{2n+1})$: this directly follows from the fact that $\alpha_{2n+1}$ is an order homomorphism by induction hypothesis.
\item $\bb_1,\bb_2 \in \{\bc_0,\bc_1\}$: since $\bc_0 \oplus \bc_1 \not\in \aa$ and therefore differs from both $\bc_0$ and $\bc_1$, this can only happen if $\bb_1 = \bb_2$, so this case is trivial.
\item $\bb_1 \in \{\bc_0,\bc_1\}$, $\bb_2 \in \dom(\alpha_{2n+1})$: note that $\bc_0,\bc_1 >_T \ba_n$ (otherwise $\bc_0 \oplus \bc_1 \in \{\bc_0,\bc_1\} \subseteq \aa$), so we see that $\bb_2 >_T \ba$, and then by construction of $\bc_0$ and $\bc_1$ we know that $\bb_2 \oplus \bc_0, \bb_2 \oplus \bc_1 \not\in \aa$. This contradicts $\bb_1 \leq_T \bb_2$, so this case is impossible.
\item $\bb_1 \in \dom(\alpha_{2n+1})$, $\bb_2 \in \{\bc_0,\bc_1\}$: if $\bb_1 \not\leq_T \ba_n$, then again by construction of $\bc_0$ and $\bc_1$ we have that $\bb_2 = \bb_1 \oplus \bb_2 \not\in \aa$ which is a contradiction. So $\bb_1 \leq_T \ba_n$ and therefore $\alpha_{2n+2}(\bb_1) = \alpha_{2n+1}(\bb_1) \subseteq \alpha_{2n+1}(\ba_n) \subseteq \alpha_{2n+2}(\bb_2)$. 
\end{itemize}

Finally, we show that if $\bb \in \dom(\alpha_{2n+2})$ is such that $\alpha_{2n+2}(\bb)$ and $\alpha_{2n+2}(\bc_1)$ are incomparable, then $\bb \oplus \bc_1 \not\in \aa$ (the same then follows analogously for $\bc_2$). If $\bb = \bc_2$ this is clear from the definition of $\alpha_{2n+2}$. Otherwise, we have $\bb \in \dom(\alpha_{2n+1})$.
If it were the case that $\bb \leq_T \ba_n$, then $\alpha_{2n+2}(\bb) = \alpha_{2n+1}(\bb) \subseteq \alpha_{2n+1}(\ba_n) \subseteq \alpha_{2n+2}(\bc_1)$, a contradiction. Thus $\bb \not\leq_T \ba_n$, and therefore $\bb \oplus \bc_1 \not\in \aa$ by construction of $\bc_1$.
\end{proof}

\begin{thm}
For any splitting class $\aa$: $\Th(\mw / \overline{\aa}) = \mathrm{IPC}$.
\end{thm}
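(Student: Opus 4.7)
The plan is to chain together the machinery already developed in the paper: the algebraic–Kripke translation of Proposition \ref{prop-much-to-kripke}, the p-morphism $\alpha : (\aa,\leq_T) \to 2^{<\omega}$ produced in Proposition \ref{prop-split-tree}, and the fact that $\Th(2^{<\omega}) = \mathrm{IPC}$ from Theorem \ref{thm-tree-ipc}. Everything is already in place; the task is essentially to read off the two inclusions.

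For the inclusion $\Th(\mw/\overline{\aa}) \subseteq \mathrm{IPC}$, I would first invoke Proposition \ref{prop-much-to-kripke}, which applies because splitting classes are by definition downwards closed under Turing reducibility, to rewrite $\Th(\mw/\overline{\aa})$ as the Kripke-frame theory $\Th(\aa,\leq_T)$. Then Proposition \ref{prop-split-tree} gives a p-morphism $\alpha: (\aa,\leq_T) \to 2^{<\omega}$, so Proposition \ref{prop-p-morphism} yields $\Th(\aa,\leq_T) \subseteq \Th(2^{<\omega})$, and the latter equals $\mathrm{IPC}$ by Theorem \ref{thm-tree-ipc}.

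For the reverse inclusion $\mathrm{IPC} \subseteq \Th(\mw/\overline{\aa})$, I would appeal to the soundness result (McKinsey--Tarski) stated earlier: the theory of any Brouwer algebra contains $\mathrm{IPC}$. Since $\mw/\overline{\aa}$ is a Brouwer algebra (being a factor of the Muchnik lattice by a principal filter), this inclusion is immediate.

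There is no real obstacle left at this point, because all the content has been pushed into Proposition \ref{prop-split-tree}; the proof of the final theorem is essentially a one-line combination of the two displayed inclusions and hence gives $\Th(\mw/\overline{\aa}) = \mathrm{IPC}$. If anything required care, it would be making sure that the hypothesis of Proposition \ref{prop-much-to-kripke} (downward closure of $\aa$) is explicitly cited, and that the direction of the p-morphism inclusion in Proposition \ref{prop-p-morphism} is used the right way around (from source to target), but both are formalities.
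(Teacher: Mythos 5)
Your proposal is correct and is exactly the paper's argument: the paper's proof is a one-line citation of Propositions \ref{prop-much-to-kripke}, \ref{prop-p-morphism}, \ref{prop-split-tree} and Theorem \ref{thm-tree-ipc}, which you have simply unfolded, with the reverse inclusion handled by Brouwer-algebra soundness just as the paper intends.
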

\begin{proof}
From Proposition \ref{prop-much-to-kripke}, Proposition \ref{prop-p-morphism}, Theorem \ref{thm-tree-ipc} and Proposition \ref{prop-split-tree}.
\end{proof}

Therefore, combining this with the results from section \ref{sec-low} we now see:
\begin{thm}
$\Th(\mw / \overline{\A_\mathrm{low}}) = \Th(\mw / \overline{\A_\mathrm{\mathrm{gen}\leq\emptyset'}}) = \mathrm{IPC}.$
\end{thm}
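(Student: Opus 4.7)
The plan is to simply combine the two main results already established in the paper. In Section \ref{sec-low}, it is shown that both $\A_\mathrm{low}$ and $\A_{\mathrm{gen}\leq\emptyset'}$ are splitting classes in the sense of Definition \ref{def-split}. In the preceding theorem of Section \ref{sec-ipc}, we have established that for \emph{any} splitting class $\aa$, the factor $\mw/\overline{\aa}$ satisfies $\Th(\mw/\overline{\aa}) = \mathrm{IPC}$.

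Therefore the proof reduces to two applications of the previous theorem, one for $\aa = \A_\mathrm{low}$ and one for $\aa = \A_{\mathrm{gen}\leq\emptyset'}$. There is no further calculation needed: the entire conceptual content has been packaged into the splitting class framework for exactly this reason, and the two specific verifications in Section \ref{sec-low} together with the general Kripke-semantic argument via $p$-morphisms to $2^{<\omega}$ (Proposition \ref{prop-split-tree}) combined with Theorem \ref{thm-tree-ipc} do all the work.

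Thus there is no real obstacle here; the entire point of the abstraction is to make this final step trivial. If anything, the only thing worth double-checking while writing the proof is that the two classes indeed satisfy the mild standing hypotheses required of a splitting class (non-empty, countable, downwards closed under $\leq_T$), all of which are noted explicitly in Section \ref{sec-low}. The proof can therefore be written in a single line referencing the theorem at the end of Section \ref{sec-ipc} together with the result of Section \ref{sec-low}.
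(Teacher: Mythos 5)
Your proposal is correct and matches the paper exactly: the theorem is stated there as an immediate consequence of the preceding theorem ($\Th(\mw/\overline{\aa}) = \mathrm{IPC}$ for any splitting class $\aa$) together with the result of Section \ref{sec-low} that $\A_\mathrm{low}$ and $\A_{\mathrm{gen}\leq\emptyset'}$ are splitting classes. Nothing further is needed.
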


\section{Further splitting classes}\label{sec-hif}

\subsection{Hyperimmune-free functions}
In this section, we will look at some other classes and consider if they are splitting classes.
First, we look at the class of hyperimmune-free functions. Recall that a function $f$ is \emph{hyperimmune-free} if every $g \leq_T f$ is dominated by a computable function.  We can see a problem right away: the class of hyperimmune-free functions is well-known to be uncountable, while we required splitting classes to be countable. We temporarily remedy this by only looking at the hyperimmune-free functions which are low$_2$ (where a function $f$ is low$_2$ if $f'' \equiv_T \emptyset''$); after the proof, we will discuss how we might be able to look at the entire class.\footnote{There are different natural countable subsets of the hyperimmune-free degrees which form splitting classes; for example, instead of the low$_2$ hyperimmune-free functions we could also take the hyperimmune-free functions $f$ for which there exists an $n \in \omega$ such that $f \leq_T \emptyset^{(n)}$. This follows from the proof of Theorem \ref{thm-hif-split}. However, since our main reason to look at these countable subclasses is to view them as a stepping stone towards the class of all hyperimmune-free functions, we will not pursue this topic further.}

As in section \ref{sec-low} we will present our constructions as constructions on Cantor space rather than Baire space for the reasons discussed in that section.

\begin{thm}\label{thm-hif-split}
The class $\A_{\mathrm{HIF,low}_2}$ of hyperimmune-free functions which are low$_2$ is a splitting class. In particular, $\Th(\mw / \overline{\A_{\mathrm{HIF,low}_2}}) = \mathrm{IPC}$.
\end{thm}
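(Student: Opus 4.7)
The plan is to verify the three defining conditions of a splitting class for $\A_{\mathrm{HIF,low}_2}$ and then conclude via the theorem that splitting classes capture IPC. The routine conditions come first: the class is non-empty (it contains all computable functions), countable (if $f$ is low$_2$ then $f \leq_T f'' \equiv_T \emptyset''$, so $f$ is $\Delta^0_3$), and downwards closed under $\leq_T$ (hyperimmune-freeness is classically downwards closed, and $g \leq_T f$ implies $g'' \leq_T f''$, which gives low$_2$-ness).

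The substantive work is in verifying the splitting property via Proposition \ref{prop-split-equiv}(ii): given $f \in \A_{\mathrm{HIF,low}_2}$ and a finite $\B = \{g_1, \ldots, g_k\}$ of hyperimmune-free low$_2$ functions with $g_i \not\leq_T f$, I construct $h$ in the class with $h >_T f$ and $g_i \oplus h$ outside the class for each $i$. My strategy is to arrange $g_i \oplus h \geq_T \emptyset''$ for each $i$; then $(g_i \oplus h)'' \geq_T \emptyset'''$, so $g_i \oplus h$ is not low$_2$, hence not in $\A_{\mathrm{HIF,low}_2}$. Moreover, $h \not\leq_T f$ will follow, as in Section \ref{sec-low}, from the cupping applied to a suitably chosen $g_0 \in \B$ together with the fact that no $g_i \oplus f$ can equal $\emptyset''$ (since the $g_i$ are low$_2$).

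To build such an $h$ I adapt the Posner-Robinson finite-extension construction of Section \ref{sec-low} to a tree-forcing construction, replacing $\emptyset'$ by $\emptyset''$ as the ambient oracle and replacing finite string extensions by finite refinements of $\Pi^0_1(f)$-subtrees so that the resulting $h$ is hyperimmune-free over $f$. At stage $e$ I pass to a $\Pi^0_1(f)$-subtree $T_{e+1} \subseteq T_e$ rooted above a designated coordinate, either forcing $\{e\}^{f \oplus h}(e)$ to converge or forcing it to diverge on all of $T_{e+1}$, while encoding the bit $\emptyset''(e)$ at a recoverable position. Each $g_i$, being low$_2$, admits a $\emptyset''$-computable approximation whose modulus plays the role of the computation function $\alpha_i$ from Section \ref{sec-low}, and the Posner-Robinson analysis, transferred to trees, then yields $g_i \oplus h \geq_T \emptyset''$. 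Any path $h$ through $\bigcap_e T_e$ is hyperimmune-free over $f$ by the tree-thinning argument underlying the hyperimmune-free basis theorem relativized to $f$, so $f \oplus h$ is hyperimmune-free (because $f$ is). Finally, $f \oplus h$ is low$_2$ because the whole construction is $\emptyset''$-computable, yielding $(f \oplus h)'' \leq_T \emptyset''$.

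The main obstacle is the simultaneous management of hyperimmune-freeness over $f$, low$_2$-ness of $f \oplus h$, and $\emptyset''$-cupping with each $g_i$ inside one tree construction: the Posner-Robinson bookkeeping must be carried out on trees rather than on strings, and the jump of the resulting $f \oplus h$ must be controlled tightly enough to remain below $\emptyset''$. Once the splitting property is established, $\A_{\mathrm{HIF,low}_2}$ is a splitting class, and the theorem that $\Th(\mw / \overline{\aa}) = \mathrm{IPC}$ for any splitting class $\aa$ yields $\Th(\mw / \overline{\A_{\mathrm{HIF,low}_2}}) = \mathrm{IPC}$.
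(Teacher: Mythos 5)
Your high-level plan (Miller--Martin trees carried out below $\emptyset''$, combined with Posner--Robinson-style coding) is in the spirit of the paper's proof, but the central step as you state it does not go through. You aim to get $g_i \oplus h \geq_T \emptyset''$ outright, recovered via ``the modulus of a $\emptyset''$-computable approximation of $g_i$''. For a low$_2$ set $g_i$ the relevant approximation is a $\emptyset'$-computable one, so its modulus is computable only from $g_i \oplus \emptyset'$, not from $g_i$ alone; moreover the case distinctions in a Miller--Martin construction (totality versus non-totality of a functional on a subtree) are $\Sigma^0_2(f)/\Pi^0_2(f)$ facts that $g_i \oplus f \oplus h$ cannot verify without a jump. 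Tracking this carefully, the recovery argument yields at best $(g_i \oplus h)' \geq_T \emptyset''$. That weaker conclusion does suffice --- by Martin's domination theorem it implies $g_i \oplus h$ computes a dominant function, hence is not hyperimmune-free --- and this is exactly the route the paper takes; but you must argue it that way, and you must also dispose of the case where the cupping partner is computable in (or c.e.\ in) $f'$, since the termination of the coding searches depends on this (the paper reduces to that case using Miller--Martin's theorem that there are no hyperimmune-free degrees in $(\deg f, \deg f']$). Note also that the paper verifies clause (iii) of Proposition \ref{prop-split-equiv} rather than (ii): it cups only a single designated partner $h_0$ up to high degree, coding $h_0$ by membership queries into the choice of alternatives, and merely prevents $h_1$ from computing the other elements of $\B$ via Spector splitting trees. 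Your choice of (ii) forces you into a simultaneous-cupping problem that is strictly harder and is where these difficulties accumulate.

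Two further gaps. First, your justification of low$_2$-ness (``the whole construction is $\emptyset''$-computable, yielding $(f \oplus h)'' \leq_T \emptyset''$'') is a non sequitur: $f \oplus h \leq_T \emptyset''$ only gives $(f \oplus h)'' \leq_T \emptyset^{(4)}$. What is needed is that totality of each $\{e\}^{f \oplus h}$ is forced at a finite, $\emptyset''$-computable stage; this happens to coincide with the Miller--Martin hyperimmune-freeness requirement, but the requirement you actually state forces only the jump ($\{e\}^{f \oplus h}(e)\downarrow$ or not), which yields $(f \oplus h)' \leq_T \emptyset''$ and hence only $(f \oplus h)'' \leq_T \emptyset'''$. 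Second, your argument that $h \not\leq_T f$ rests on the claim that no $g_i \oplus f$ can be Turing equivalent to $\emptyset''$ ``since the $g_i$ are low$_2$''; lowness$_2$ is not preserved under joins, so this claim needs proof. The clean fix, used in the paper, is to add explicit diagonalization requirements $D \neq \{e\}^{f}$ into the tree construction.
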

\begin{proof}
We prove that (iii) of Proposition \ref{prop-split-equiv} holds. That for every hyperimmune-free low$_2$ set $A$ there exists a hyperimmune-free low$_2$ set $B$ such that $B \not\leq_T A$ (or that there even exists one such that $B >_T A$) is well-known, see Miller and Martin \cite[Theorem 2.1]{miller-martin-1968}. We prove the second part of (iii) from Proposition \ref{prop-split-equiv}. Our construction uses the tree method of Miller and Martin \cite{miller-martin-1968}.

Let $A \leq_T \emptyset''$ be hyperimmune-free and low$_2$, let
\[\B \subseteq \{B \subseteq \omega \mid B \not\leq_T A, B \leq_T \emptyset'' \text{ and } B \text{ HIF}\}\]
be a finite subset and let $C_0 \leq_T \emptyset''$ be a hyperimmune-free (low$_2$) set not below $A$. We need to construct a hyperimmune-free set $A <_T C_1 \leq_T \emptyset''$ such that $C_0 \oplus C_1$ is not of hyperimmune-free degree (i.e.\ of hyperimmune degree) and such that for all $B \in \B$ we have that $C_1 \not\geq_T B$.

First, we remark that we may assume that not only $C_0 \not\leq_T A$, but even that $C_0 \not\leq_T A'$. Indeed, assume $C_0 \leq_T A'$. If $C_0 \geq_T A$ then we see that $A < C_0 \leq A'$ so by Miller and Martin \cite[Theorem 1.2]{miller-martin-1968} we see that $C_0$ is  of hyperimmune degree, contrary to our assumption. So, $C_0 \mid_T A$. However, then $A <_T A \oplus C_0 \leq A'$ and as before we then see that $A \oplus C_0$ is already of hyperimmune degree, so we may take $C_1$ to be any hyperimmune-free set strictly above $A$ which is low$_2$ (such a set can be directly constructed using the construction of Miller and Martin).

Without loss of generality we may even assume that $C_0$ is not c.e.\ in $A'$: we may replace $C_0$ by $C_0 \oplus \overline{C_0}$, which is of the same Turing degree as $C_0$, and is not c.e.\ in $A'$ because otherwise $C_0$ would be computable in $A'$, a contradiction.

Let $\B = \{B_1,\dots,B_n\}$ and fix a computable enumeration $\alpha$ of $n \times \omega$. We will construct a sequence $T_0 \supseteq T_1 \supseteq \dots$ of $A$-computable binary trees (in the sense of Odifreddi \cite[Definition V.5.1]{odifreddi-1989}) such that:
\begin{enumerate}[\rm (i)]
\item $T_0$ is the full binary tree.
\item For all $D$ on $T_{4e+1}$: $D \not= \{e\}^A$.
\item For $T_{4e+2}$, one of the following holds:
\begin{enumerate}
\item For all $D$ on $T_{4e+2}$, $\{e\}^{A \oplus D}$ is not total.
\item For all $D$ on $T_{4e+2}$, $\{e\}^{A \oplus D}$ is total and 
\[\forall n \forall \sigma (|\sigma| = n \rightarrow \{e\}^{A \oplus T_{4e+2}(\sigma)}(n)[|T_{4e+2}(\sigma)|] \downarrow).\]
\end{enumerate}
Furthermore, this choice is computable in $\emptyset''$.
\item For all $D$ on $T_{4e+3}$, $\{\alpha_2(e)\}^{A \oplus D} \not= B_{\alpha_1(e)}$.
\item $T_{4e+4}$ is the full subtree of $T_{4e+3}$ above $T_{4e+3}(\langle\emptyset''(e)\rangle)$.
\item For every infinite branch $D$ on all of the trees $T_i$, the sequence $T_0 \supseteq T_1 \supseteq \dots$ is computable in $C_0 \oplus (A' \oplus D)$.
\item The sequence $T_0 \supseteq T_1 \supseteq \dots$ is computable in $\emptyset''$.
\end{enumerate}

For now, assume we can construct such a sequence. Let $D = \bigcup_{i \in \omega}T_i(\emptyset)$, then $D$ is an infinite branch lying on all of the $T_i$. Let $C_1 = A \oplus D$. Then the requirements (ii) guarantee that $D \not\leq_T A$ and therefore $C_1 >_T A$. By (vii) we also have that $C_1 \leq_T \emptyset''$. Furthermore, the requirements (iii) enforce that $C_1$ is hyperimmune-free relative to $A$ (due to Miller and Martin, see e.g.\ Odifreddi \cite[Proposition V.5.6]{odifreddi-1999}), and because $A$ is itself hyperimmune-free it is directly seen that $C_1$ is hyperimmune-free. The requirements (iv) ensure that $C_1 \not\geq_T B_i$ for all $B_i \in \B$.

Next, we have that $(C_0 \oplus C_1)' \geq_T C_0 \oplus (A' \oplus D) \geq_T \emptyset''$: by requirement (vi) the sequence $T_i$ is computable in $C_0 \oplus (A' \oplus D)$, while by requirement (v) we have that $T_{4e+4}(\emptyset) = T_{4e+3}(\emptyset) \star \emptyset''(e)$ which allows us to recover $\emptyset''(e)$. So, $C_0 \oplus C_1$ is not low$_2$. In fact, $C_0 \oplus C_1$ is not even hyperimmune-free: by a theorem of Martin $(C_0 \oplus C_1)' \geq_T \emptyset''$ implies that $C_0 \oplus C_1$ computes a function which dominates every total computable function (see e.g.\ Odifreddi \cite[Theorem XI.1.2]{odifreddi-1999}), and therefore $C_0 \oplus C_1$ is not hyperimmune-free, as desired.

Finally, we show that $C_1$ is low$_2$. By requirement (iv) and requirement (vii) we have that $\emptyset'' \geq_T \{e \in \omega \mid \{e\}^{C_1} \textrm{ is total}\}$. Since the latter has the same Turing degree as $C_1''$, this shows that $C_1$ is indeed low$_2$. 

\bigskip
We now show how to actually construct such a sequence of computable binary trees. Let $T_0$ be the full binary tree. Next, assume $T_{4e}$ has already been defined. To fulfil requirement (ii), observe that  $T_{4e}(0)$ and $T_{4e}(1)$ are incompatible, so at least one of them has to differ from $\{e\}^A$. If the first differs from $\{e\}^A$ we take $T_{4e+1}$ to be the full subtree above $T_{4e}(0)$, and otherwise we take the full subtree above $T_{4e}(1)$.

Next, assume $T_{4e+1}$ has been defined, we will construct $T_{4e+2}$ fulfilling requirement (iii). Let $n$ be the smallest $m \in \omega$ such that either
\begin{equation}\label{hif-1}
m \not\in C_0 \wedge \exists \sigma \supseteq \langle 0^m1\rangle \exists x \forall \tau \supseteq \sigma \left(\{e\}^{A \oplus T_{4e+1}(\tau)}(x) \uparrow\right)
\end{equation}
or
\begin{equation}\label{hif-2}
m \in C_0 \wedge \forall \sigma \supseteq \langle 0^m1\rangle \forall x \exists \tau \supseteq \sigma \left(\{e\}^{A \oplus T_{4e+1}(\tau)}(x) \downarrow\right),
\end{equation}
where as before $\langle 0^m1\rangle$ denotes the string consisting of $m$ times a $0$ followed by a $1$.

Such an $m$ exists: indeed, if such an $m$ did not exist, then
\[C_0 = \left\{m \in \omega \mid \exists \sigma \supseteq \langle 0^m1\rangle \exists x \forall \tau \supseteq \sigma \left(\{e\}^{A \oplus T_{4e+1}(\tau)}(x) \uparrow\right)\right\}\]
and therefore $C_0$ is c.e.\ in $A'$, which contradicts our assumption above.

If \eqref{hif-1} holds for $n$, let $\sigma \supseteq \langle 0^n1\rangle$ be the smallest such string and let $T_{4e+2}$ be the full subtree above $T_{4e+1}(\sigma)$. Otherwise, we inductively define $T_{4e+2} \subseteq T_{4e+1}$. First, if we let $\tau$ be the least $\tilde{\tau} \supseteq \langle 0^n1\rangle$ such that $\{e\}^{A \oplus T_{4e+1}(\tilde{\tau})}(0)[|T_{4e+1}(\tilde{\tau})|] \downarrow$, then we let $T_{4e+2}(0) = T_{4e+1}(\tau)$. Inductively, given $T_{4e+2}(\sigma)$, let $\rho$ be such that $T_{4e+2}(\sigma) = T_{4e+1}(\rho)$. Now, if we let $\tau$ be the least $\tilde{\tau} \supseteq \rho$ such that $\{e\}^{A \oplus T_{4e+1}(\tilde{\tau})}(|\sigma|+1)[|T_{4e+1}(\tilde{\tau})|] \downarrow$, we let $T_{4e+2}(\sigma \star 0) = T_{4e+1}(\tau \star 0)$ and $T_{4e+2}(\sigma \star 1) = T_{4e+1}(\tau \star 1)$.

For the requirements (iv) we do something similar. Let $\tilde{e} = \alpha_2(e)$. First, we build a subtree $S \subseteq T_{4e+2}$ such that either there is no $\tilde{e}$-splitting relative to $A$ on $S$ (i.e.\ for all strings $\sigma,\tau$ on $S$ and all $x \in \omega$, if $\{\tilde{e}\}^{A \oplus \sigma}(x) \downarrow$ and $\{\tilde{e}\}^{A \oplus \tau}(x) \downarrow$, then their values are equal), or $S(0)$ and $S(1)$ are an $\tilde{e}$-splitting relative to $A$ (in fact, $S$ will even be an $\tilde{e}$-splitting tree relative to $A$). Let $n$ be the smallest $m \in \omega$ such that
\begin{align}
m \not\in C_0 \wedge \exists \sigma \supseteq \langle 0^m1\rangle \forall \tau,\tau' \supseteq \sigma \forall x \Big(\{\tilde{e}\}^{A \oplus T_{4e+2}(\tau)}(x) \downarrow \wedge \{\tilde{e}\}^{A \oplus T_{4e+2}(\tau')}(x) \downarrow\notag\\
 \to \{\tilde{e}\}^{A \oplus T_{4e+2}(\tau)}(x)  = \{\tilde{e}\}^{A \oplus T_{4e+2}(\tau')}(x)\Big)\label{hif-3}
\end{align}
or
\begin{align}
m \in C_0 \wedge \forall \sigma \supseteq \langle 0^m1\rangle \exists \tau,\tau' \supseteq \sigma \exists x \Big(\{\tilde{e}\}^{A \oplus T_{4e+2}(\tau)}(x) \downarrow \wedge \{\tilde{e}\}^{A \oplus T_{4e+2}(\tau')}(x) \downarrow\notag\\
 \wedge\;\{\tilde{e}\}^{A \oplus T_{4e+2}(\tau)}(x)  \not= \{\tilde{e}\}^{A \oplus T_{4e+2}(\tau')}(x)\Big)\label{hif-4}
\end{align}

That such an $m$ exists can be shown in the same way as above. If \eqref{hif-3} holds for $n$, let $\sigma$ be the smallest such string and let $S$ be the full subtree above $T_{4e+2}(\sigma)$. Then there are no $\tilde{e}$-splittings relative to $A$ on $S$. Otherwise, we can inductively build $S$: let $S(\emptyset) = T_{4e+2}(\langle 0^n1\rangle)$ and if $S(\sigma)$ is already defined we can take $S(\sigma \star 0)$ and $S(\sigma \star 1)$ to be two $\tilde{e}$-splitting extensions relative to $A$ of $S(\sigma)$ on $T_{4e+2}$.

If there are no $\tilde{e}$-splittings relative to $A$ on $S$, then we can take $T_{4e+3} = S$. Since, assume $\{\tilde{e}\}^{A \oplus D} = B_i$ for some $B_i \in \B$. Then, by Spector's result (see e.g. Odifreddi \cite[Proposition V.5.9]{odifreddi-1999}) we have that $B_i \leq_T A$, contrary to assumption.

Otherwise we can find an $x \in \omega$ such that $\{\tilde{e}\}^{A \oplus S(\langle0\rangle)}(x)$ and $\{\tilde{e}\}^{A \oplus S(\langle1\rangle)}(x)$ both converge, but such that their value differs. Then either $\{\tilde{e}\}^{A \oplus S(\langle0\rangle)}(x) \neq B_{\alpha_1(e)}$ and we take $T_{4e+3}$ to be the full subtree above $S(\langle 0 \rangle)$, or $\{\tilde{e}\}^{A \oplus S(\langle1\rangle)}(x) \neq B_{\alpha_1(e)}$ and we take $T_{4e+3}$ to be the full subtree above $S(\langle 1 \rangle)$. Then $T_{4e+3}$ satisfies requirement (iv).

Finally, how to define $T_{4e+4}$ from $T_{4e+3}$ is already completely specified by requirement (v). This completes the definitions of all the $T_i$. Note that all steps in the construction are computable in $A'' \equiv_T \emptyset''$.

So, the last thing we need to show is that requirement (vi) is satisfied, i.e.\ that for any infinite branch $D$ on all $T_i$ the construction is computable in $C_0 \oplus (A' \oplus D)$. This is clear for the construction of $T_{4e+1}$ from $T_{4e}$. For the construction of $T_{4e+2}$ from $T_{4e+1}$ the only real problem is that we need to choose between \eqref{hif-1} and \eqref{hif-2}. However, because $D$ is on $T_{4e+2}$, we can uniquely determine $n \in \omega$ such that $T_{4e+1}(\langle 0^n1\rangle)$ is an initial segment of $D$. Then \eqref{hif-1} holds if and only if $n \not\in C_0$ and \eqref{hif-2} holds if and only if $n \in C_0$.  So, we can decide which alternative was taken using $C_0$. Furthermore, if \eqref{hif-1} holds then we can use $A'$ to calculate the string $\sigma$ used in the computation of $T_{4e+2}$.

For $T_{4e+3}$ we can do something similar for the tree $S$ used in the definition of $T_{4e+3}$, and using $D$ we can determine if we took $T_{4e+3}$ to be the subtree above $S(\langle 0 \rangle)$ or $S(\langle 1 \rangle)$. Finally, using $D$ it is also easily decided which alternative we took for $T_{4e+4}$, because $T_{4e+4}$ is the full subtree above $T_{4e+3}(\langle i \rangle)$ for the unique $i \in \{0,1\}$ such that $T_{4e+3}(\langle i \rangle) \subseteq D$. Therefore we see that the construction is indeed computable in $C_0 \oplus (A' \oplus D)$, which completes our proof.
\end{proof}

This result is slightly unsatisfactory because we restricted ourselves to the hyper\-immune-free which are low$_2$. Because the entire class of hyperimmune-free functions $\A_\mathrm{HIF}$ is also downwards closed we directly see from the proof above that the only real problem is the uncountability, i.e.\ $\A_\mathrm{HIF}$ satisfies all properties of a splitting class except for the countability. Our next result shows that, if we assume the continuum hypothesis, we can still show that the theory of the factor given by $\A_\mathrm{HIF}$ is IPC.

\begin{defi}
Let $\aa \subseteq \omega^\omega$ be a non-empty class of cardinality $\aleph_1$ which is downwards closed under Turing reducibility. We say that $\aa$ is an $\aleph_1$ \emph{splitting class} if for every $f \in \aa$ and every countable subset $\B \subseteq \{g \in \aa \mid g \not\leq_T f\}$ there exist $h_0,h_1 \in \aa$ such that $h_0,h_1 \geq_T f$, $h_0 \oplus h_1 \not\in \aa$ and for all $g \in \B$: $g \oplus h_0, g \oplus h_1 \not\in \aa$.
\end{defi}

\begin{prop}\label{prop-split-equiv-2}
Let $\aa \subseteq \omega^\omega$ be a non-empty class of cardinality $\aleph_1$ which is downwards closed under Turing reducibility. Then the following are equivalent:
\begin{enumerate}[\rm (i)]
\item $\aa$ is an $\aleph_1$ splitting class.
\item For every $f \in \aa$ and every countable subset $\B \subseteq \{g \in \aa \mid g \not\leq_T f\}$ there exists $h \in \aa$ such that $h >_T f$ and for all $g \in \B$: $g \oplus h \not\in \aa$.
\end{enumerate}
Furthermore, if every countable chain in $\aa$ has an upper bound in $\aa$, these two are also equivalent to:
\begin{enumerate}[\rm (i)]
\setcounter{enumi}{2}
\item For every $f \in \aa$ there exists $h \in \aa$ such that $h \not\leq_T f$, and for every $f \in \aa$, every countable subset $\B \subseteq \{g \in \aa \mid g \not\leq_T f\}$ and every $h_0 \in \{g \in \aa \mid g \not\leq_T f\}$ there exists $h_1 \in \aa$ such that $h_1 >_T f$, $h_0 \oplus h_1 \not\in \aa$ and for all $g \in \B$: $h_1 \not\geq_T g$.
\end{enumerate}
\end{prop}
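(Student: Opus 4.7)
The plan is to follow the structure of the proof of Proposition \ref{prop-split-equiv}, upgrading each of the finite arguments there to the countable setting. For the equivalence of (i) and (ii), no new idea is required: taking $h := h_0$ gives (i) $\to$ (ii), since $h \equiv_T f$ would force $h_0 \oplus h_1 \equiv_T h_1 \in \aa$, contradicting $h_0 \oplus h_1 \notin \aa$; and for (ii) $\to$ (i) we apply (ii) twice in succession, once to $\B$ to obtain $h_0$, and then to the still-countable set $\B \cup \{h_0\}$ (noting $h_0 \in \aa$ and $h_0 >_T f$) to obtain $h_1$. For (ii) $\to$ (iii), the existence of some $h \in \aa$ with $h \not\leq_T f$ is the special case $\B = \emptyset$ of (ii); for the second part, apply (ii) to $\B \cup \{h_0\}$ to obtain $h_1$, and note that the condition $h_1 \not\geq_T g$ for $g \in \B$ is then automatic, for $h_1 \geq_T g$ would give $g \oplus h_1 \equiv_T h_1 \in \aa$, contradicting the output of (ii). None of these directions use the chain hypothesis.

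The substantive step is (iii) $\to$ (ii), where the chain hypothesis is indispensable. I would enumerate $\B = \{g_1, g_2, \dots\}$ and construct a strictly increasing chain $f = h_0' <_T h_1' <_T h_2' <_T \dots$ in $\aa$ by recursion: at stage $i$, apply (iii) with the $f$-parameter set to $h_i'$, the $h_0$-parameter set to $g_{i+1}$, and the $\B$-parameter set to $\{g_j \mid j \geq i+2\}$, obtaining $h_{i+1}' \in \aa$ with $h_{i+1}' >_T h_i'$, $h_{i+1}' \oplus g_{i+1} \notin \aa$, and $h_{i+1}' \not\geq_T g_j$ for all $j \geq i+2$. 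Applicability at stage $i$ requires $g_j \not\leq_T h_i'$ for every $j \geq i+1$; this is the input hypothesis when $i = 0$ and propagates to stage $i+1$ via the last clause recorded at stage $i$. Then invoke the chain hypothesis to select $h \in \aa$ lying above every $h_i'$. This $h$ satisfies $h \geq_T h_1' >_T f$, and for each $g_{i+1} \in \B$ we have $g_{i+1} \oplus h \geq_T g_{i+1} \oplus h_{i+1}' \notin \aa$, so downward closure of $\aa$ gives $g_{i+1} \oplus h \notin \aa$, as required. (If $\B$ is finite the same scheme works with a finite induction and no upper bound is needed, reproducing Proposition \ref{prop-split-equiv}.)

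The delicate point is the bookkeeping that ensures that, at every finite stage of the $\omega$-chain construction, the tail of $\B$ not yet handled remains incomparable with the current chain element, so that (iii) can be legally reapplied; peeling off one element of $\B$ per stage via the $h_0$-parameter of (iii), while relegating the rest to the $\B$-parameter, is what makes this possible. The countable-chain hypothesis is then precisely what fuses the resulting sequence into a single element of $\aa$ that takes care of all of $\B$ simultaneously; this is the only essentially new feature compared with the countable setting of Proposition \ref{prop-split-equiv}, and without it the method only handles finite $\B$.
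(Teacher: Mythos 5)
Your proof is correct and takes essentially the same route as the paper's, which simply states that the argument is ``in almost exactly the same way as Proposition \ref{prop-split-equiv}'', with the implication (iii) $\to$ (ii) upgraded to an infinite chain $h_{1,0} <_T h_{1,1} <_T \dots$ capped by an upper bound in $\aa$ --- exactly your construction, including the bookkeeping of peeling off one $g_{i+1}$ per stage while keeping the tail incomparable. The only detail you leave implicit is the degenerate case $\B = \emptyset$ in (iii) $\to$ (ii), which the paper handles by fixing some $g_1 \in \aa$ with $g_1 \not\leq_T f$ (available from the first clause of (iii)) and assuming without loss of generality that $g_1 \in \B$, so that the chain is nontrivial and $h >_T f$ is guaranteed.
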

\begin{proof}
In almost exactly the same way as Proposition \ref{prop-split-equiv}. For the implication (iii) $\to$ (ii) we define an infinite sequence $h_{1,0} <_T h_{1,1} <_T \dots$ instead of a finite one, and then let $h$ be an upper bound in $\aa$ of this chain.
\end{proof}

\begin{thm}
For any $\aleph_1$ splitting class $\aa$: $\Th(\mw / \overline{\aa}) = \mathrm{IPC}$.
\end{thm}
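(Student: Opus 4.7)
The plan is to carry out an exact transfinite analogue of the construction in Proposition \ref{prop-split-tree}. By Proposition \ref{prop-much-to-kripke}, Proposition \ref{prop-p-morphism}, and Theorem \ref{thm-tree-ipc}, it suffices to exhibit a p-morphism $\alpha \colon (\aa/{\equiv_T}, \leq_T) \to 2^{<\omega}$. As in the countable case, I would work with $\aa/{\equiv_T}$ in place of $\aa$ and fix an enumeration $(\ba_\xi)_{\xi < \omega_1}$ of the quotient (the quotient has cardinality $\aleph_1$ since each Turing degree is countable).

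I would build a transfinite chain $(\alpha_\xi)_{\xi < \omega_1}$ of finite-or-countable partial order homomorphisms from $\aa/{\equiv_T}$ to $2^{<\omega}$, preserving the invariant of the original proof: if $\ba, \bb \in \dom(\alpha_\xi)$ and $\alpha_\xi(\ba) \mid \alpha_\xi(\bb)$, then $\ba \oplus \bb \notin \aa$. Stage $0$ sets $\alpha_0(\bzero) = \emptyset$. At a successor stage $\xi+1$ I first extend $\alpha_\xi$ to include $\ba_\xi$, assigning it the largest element of the set $X = \{\alpha_\xi(\bb) \mid \bb \in \dom(\alpha_\xi), \bb \leq_T \ba_\xi\}$ (which is totally ordered by the invariant); then I take $\B := \{\bb \in \dom(\alpha_\xi) \mid \bb \not\leq_T \ba_\xi\}$ and apply the $\aleph_1$-splitting property to obtain $\bc_0, \bc_1 \geq_T \ba_\xi$ with $\bc_0 \oplus \bc_1 \notin \aa$ and $\bb \oplus \bc_i \notin \aa$ for every $\bb \in \B$, mapping $\bc_0, \bc_1$ to the two one-step extensions of $\alpha_{\xi+1}(\ba_\xi)$. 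The verifications that the invariant is preserved and that $\alpha_{\xi+1}$ remains an order homomorphism are the same case analyses as in Proposition \ref{prop-split-tree}. At limit stages $\lambda < \omega_1$ I simply set $\alpha_\lambda = \bigcup_{\xi < \lambda} \alpha_\xi$; the invariant transfers to the union because any pair from $\dom(\alpha_\lambda)$ already lives in some $\dom(\alpha_\xi)$ with $\xi < \lambda$.

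The main obstacle, and the place where the strengthened hypothesis is genuinely needed, is that the set $\B$ appearing at each successor stage must be countable so that the $\aleph_1$-splitting definition can be invoked. This requires showing that $\dom(\alpha_\xi)$ remains countable for every $\xi < \omega_1$: each successor step adds only finitely many new elements, and a union of countably many countable sets is countable, so the claim follows by transfinite induction on $\xi < \omega_1$. With this in hand, $\alpha := \bigcup_{\xi < \omega_1} \alpha_\xi$ is defined on all of $\aa/{\equiv_T}$ because every $\ba_\xi$ is added by stage $\xi+1$; surjectivity and the lifting property (2) of a p-morphism follow exactly as before from the branching requirement handled at each successor stage. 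Combining this p-morphism with Proposition \ref{prop-much-to-kripke}, Proposition \ref{prop-p-morphism}, and Theorem \ref{thm-tree-ipc} then yields $\mathrm{IPC} \subseteq \Th(\mw / \overline{\aa}) \subseteq \Th(2^{<\omega}) = \mathrm{IPC}$, where the first inclusion is the usual soundness, completing the proof.
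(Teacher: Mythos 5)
Your plan follows the paper's strategy in outline, but there is a genuine gap at the successor step where you add $\ba_\xi$ to the domain: you assign it ``the largest element of the set $X = \{\alpha_\xi(\bb) \mid \bb \in \dom(\alpha_\xi),\ \bb \leq_T \ba_\xi\}$''. In the countable construction of Proposition \ref{prop-split-tree} this is fine because $X$ is a finite chain. In the transfinite construction $\dom(\alpha_\xi)$ is only countable, so $X$ can be a countably infinite chain in $2^{<\omega}$, and an infinite chain in $2^{<\omega}$ has no largest element --- indeed no upper bound in $2^{<\omega}$ at all. This situation actually arises: the branching requirements force the construction to produce, below suitable elements, infinite $\leq_T$-chains $\bc_0 <_T \bc_{00} <_T \bc_{000} <_T \dots$ whose images $\langle 0\rangle \subset \langle 00\rangle \subset \langle 000\rangle \subset \dots$ are unbounded in $2^{<\omega}$. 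If $\aa$ contains an upper bound of such a chain (which holds in the intended applications, e.g.\ $\A_\mathrm{HIF}$ under CH by Miller--Martin, and is exactly the hypothesis used for clause (iii) of Proposition \ref{prop-split-equiv-2}), then when that upper bound appears in the enumeration there is no node of $2^{<\omega}$ it can be sent to while keeping $\alpha$ an order homomorphism. So no total p-morphism $(\aa/{\equiv_T}) \to 2^{<\omega}$ can be produced this way, and in general none need exist.

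The paper avoids this by changing the target: instead of one p-morphism onto $2^{<\omega}$, it builds, for each $n$, a p-morphism onto the \emph{finite} tree $2^{<n}$. In a finite tree every chain has a maximum, so the problematic step goes through (the branching requirement is simply waived when $\alpha(\ba_\gamma)$ is already a leaf of $2^{<n}$). Having p-morphisms onto all finite binary trees is still enough to conclude $\Th(\aa) \subseteq \mathrm{IPC}$, since IPC is complete with respect to the frames $2^{<n}$ (Chagrov and Zakharyaschev, Corollary 2.33). The rest of your argument --- the countability of the domains by transfinite induction, the use of countable $\B$ in the $\aleph_1$ splitting property at successor stages, and taking unions at limit stages --- matches the paper and is correct; only the choice of target frame needs to be repaired.
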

\begin{proof}
We can generalise the construction in Proposition \ref{prop-split-tree} to a transfinite construction over $\aleph_1$. However, instead of building a $p$-morphism to $2^{< \omega}$ we show that we can build a $p$-morphism to every finite binary tree of the form $2^{< n}$. This is already enough to show that the theory is IPC (see e.g.\ Chagrov and Zakharyaschev \cite[Corollary 2.33]{chagrov-zakharyaschev-1997}).

Fix an enumeration $(\ba_\gamma)_{\gamma < \aleph_1}$ of $\aa$. This time we will build a sequence $(\alpha_\gamma)_{\gamma < \aleph_1}$ of partial order homomorphisms from $\aa$ to $2^{<n}$ with countable domain, which is increasing in the sense that $\alpha_\gamma \subseteq \alpha_{\tilde{\gamma}}$ if $\gamma \leq \tilde{\gamma}$. As before, it should additionally satisfy that if $\ba,\bb \in \dom(\alpha_\gamma)$ and $\alpha_\gamma(\ba) \mid \alpha_\gamma(\bb)$, then $\ba \oplus \bb \not\in \aa$.

Fix some bijection $\zeta:  \{0,1\} \times \aleph_1 \to \aleph_1 \setminus \{0\}$ satisfying that $\zeta(1,\gamma) > \zeta(0,\gamma)$ for every $\gamma < \aleph_1$. We satisfy the requirements:
\begin{itemize}
\item $R_0$: $\alpha_0(\bzero) = \emptyset$
\item $R_{(0,\gamma)}$: $\ba_\gamma \in \dom(\alpha_{\zeta(0,\gamma)})$
\item $R_{(1,\gamma)}$: if $\sigma :=\alpha_{\zeta(0,\gamma)}(\ba_\gamma)$ is not maximal in $2^{<n}$, then there are $\bc_0,\bc_1 \in \dom(\alpha_{\zeta(1,\gamma)})$ with $\bc_0,\bc_1 \geq_T \ba_\gamma$ and $\alpha_{\zeta(1,\gamma)}(\bc_0) = \sigma \star 0$, $\alpha_{\zeta(1,\gamma)}(\bc_1) = \sigma \star 1$.
\end{itemize}

That these requirements give us the required $p$-morphisms follows in the same way as in Proposition \ref{prop-split-tree}. The construction of the sequence $(\alpha_\gamma)_{\gamma < \aleph_1}$ also proceeds in almost the same way, apart from two minor details. First, if $\gamma$ is a limit ordinal it does not have a clear predecessor, so we cannot say that $\alpha_\gamma$ should extend its predecessor. Instead, we construct $\alpha_\gamma$ as an extension of $\bigcup_{\tilde{\gamma} < \gamma} \alpha_{\tilde{\gamma}}$ (note that this union is countable because $\gamma < \aleph_1$, and hence $\bigcup_{\tilde{\gamma} < \gamma} \alpha_{\tilde{\gamma}}$ has countable domain). Secondly, the domains of the $\alpha_\gamma$ are no longer finite but are now countable, which means that in the construction for requirement $R_{(1,\gamma)}$ we now need to consider countable sets $\B$ instead of just finite sets $\B$. However, this is exactly why we changed our definition of an $\aleph_1$ splitting class to allow countable sets $\B$ instead of just finite sets $\B$.
\end{proof}

\begin{thm}\label{thm-hif-split-ch}
Assume $\mathrm{CH}$. Then $\A_\mathrm{HIF}$ is an $\aleph_1$ splitting class. In particular, $\Th(\mw / \overline{\A_\mathrm{HIF}}) = \mathrm{IPC}$.
\end{thm}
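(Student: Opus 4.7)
The plan is to verify that $\A_\mathrm{HIF}$ satisfies the $\aleph_1$ splitting class axioms under CH; the theory statement then follows from the previous theorem. Cardinality is immediate ($|\A_\mathrm{HIF}| = 2^{\aleph_0} = \aleph_1$) and downwards closure under Turing reducibility is standard, so the real task is the splitting condition itself. By Proposition \ref{prop-split-equiv-2} it will suffice to verify (ii): given a hyperimmune-free $A$ and a countable $\B = \{B_i \mid i \in \omega\}$ of hyperimmune-free sets with $B_i \not\leq_T A$, produce a hyperimmune-free $C >_T A$ with $B_i \oplus C \not\in \A_\mathrm{HIF}$ for every $i$.

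My approach is to adapt the tree-of-trees construction from the proof of Theorem \ref{thm-hif-split}, running the Posner--Robinson coding simultaneously against every $B_i$ in place of the single helper $C_0$ used there. First I perform the preliminary reductions already present in that proof: if $B_i \leq_T A'$ then $A <_T A \oplus B_i \leq_T A'$ and Miller--Martin \cite[Theorem 1.2]{miller-martin-1968} forces $A \oplus B_i$ to be of hyperimmune degree, so the requirement for that $B_i$ is automatic for any $C \geq_T A$. Hence I may assume $B_i \not\leq_T A'$ for every $i$, and then replacing each $B_i$ by the Turing-equivalent $B_i \oplus \overline{B_i}$ I may further assume that no $B_i$ is c.e.\ in $A'$. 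Next I partition $\omega$ into pairwise disjoint infinite sets $S_\mathrm{diag}$, $S_\mathrm{HIF}$, $S_i$, and $S_{\mathrm{neq},i}$ for $i \in \omega$, and build an $A$-computable nested tree sequence $(T_s)_{s \in \omega}$ whose leftmost branch is $D$; set $C := A \oplus D$. On $S_\mathrm{diag}$ I diagonalise against $\{e\}^A$ to secure $C >_T A$; on $S_\mathrm{HIF}$ I apply the Miller--Martin total-computation trick, which ensures every $C$-computable function is dominated by an $A$-computable one, so that (since $A$ is HIF) $C$ is HIF; on $S_{\mathrm{neq},i}$ I use the splitting-tree argument to force $\{e\}^C \neq B_i$; and on $S_i$ I carry out the Posner--Robinson split from the proof of Theorem \ref{thm-hif-split} with $B_i$ playing the role of $C_0$---the non-c.e.-in-$A'$-ness of $B_i$ guarantees the required witness $m$ at every such stage---and I encode the bits of $\emptyset''$ one at a time through the analogue of requirement (v), arranging that every bit of $\emptyset''$ appears somewhere within $S_i$.

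If this goes through, the analysis runs in parallel with that of Theorem \ref{thm-hif-split}: for each $i$ the tree sequence is computable in $B_i \oplus A' \oplus D \leq_T (B_i \oplus C)'$, so $\emptyset'' \leq_T (B_i \oplus C)'$, whence $B_i \oplus C$ is of hyperimmune degree by Odifreddi \cite[Theorem XI.1.2]{odifreddi-1989}. The main technical obstacle I anticipate is precisely the uniform decodability this argument demands: at a coding stage $s \in S_j$ the Posner--Robinson split was made using $B_j$, yet a different helper $B_i$ (with $i \neq j$) still has to reconstruct $T_s$ from $B_i \oplus A' \oplus D$ alone. My intended fix is to choose the tagging strings so that the witness $m$ at every coding stage is already visible from $D$, and to arrange the split so that, once $m$ is known, the choice between the two alternatives \eqref{hif-1}, \eqref{hif-2} can be read off $A'$-computably from the initial segment of $D$ through $T_s(\emptyset)$. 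The delicate point will be to verify that this bypass remains compatible with the Miller--Martin subtree structure on $S_\mathrm{HIF}$ simultaneously across all blocks, which is where the real bookkeeping will live.
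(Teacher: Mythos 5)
Your reduction to Proposition \ref{prop-split-equiv-2} is the right opening move, but you have chosen to verify condition (ii) directly, and the construction you sketch for it does not survive the obstacle you yourself identify. Condition (ii) forces you to make $B_i \oplus C$ of hyperimmune degree for \emph{every} $B_i$ in the countable set $\B$, which in the style of Theorem \ref{thm-hif-split} means coding $\emptyset''$ into $(B_i \oplus C)'$ for every $i$ simultaneously; the choice between \eqref{hif-1} and \eqref{hif-2} at a stage devoted to $B_j$ is made by consulting $B_j$, and a different helper $B_i$ cannot recover it. Your proposed fix --- arranging that the choice between the two alternatives ``can be read off $A'$-computably from the initial segment of $D$'' --- is self-defeating: if the entire construction, including the bits of $\emptyset''$ planted by the analogue of requirement (v), were decodable from $A' \oplus D \leq_T (A \oplus D)' = C'$, then $C' \geq_T \emptyset''$, and by Martin's domination theorem (Odifreddi, Theorem XI.1.2) $C$ would compute a function dominating every computable function, hence would be of hyperimmune degree --- contradicting the requirement that $C$ be hyperimmune-free. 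The external helper is essential to the decoding (the relevant $\exists\sigma\exists x\forall\tau$ condition is only c.e.\ in $A'$, not computable in $A'$), so the coding genuinely cannot be shared among countably many mutually ignorant helpers this way.

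The paper sidesteps this entirely by verifying condition (iii) of Proposition \ref{prop-split-equiv-2} rather than (ii): there only the single designated $h_0$ (the $C_0$ of Theorem \ref{thm-hif-split}) needs the full Posner--Robinson-style coding making $h_0 \oplus h_1$ hyperimmune, while the countably many $g \in \B$ need only $h_1 \not\geq_T g$, which is achieved by the Spector splitting-tree requirements (iv) with no coding at all; extending those from finitely many to countably many $B_i$ is just a matter of interleaving more requirements of the same negative type. The passage from (iii) back to (i) is then exactly where the hypothesis of Proposition \ref{prop-split-equiv-2} enters: one iterates (iii) through a countable increasing chain and takes an upper bound, using Miller and Martin's theorem that every countable chain of hyperimmune-free degrees has a hyperimmune-free upper bound. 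Your proposal never invokes condition (iii) or the chain upper bound property, and without them the argument as written does not close.
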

\begin{proof}
First, $\A_\mathrm{HIF}$ has cardinality $\aleph_1$ by CH.
Next, every countable chain in $\A_\mathrm{HIF}$ has an upper bound in $\A_\mathrm{HIF}$ (Miller and Martin \cite[Theorem 2.2]{miller-martin-1968}), so we can use the equivalence of (i) and (iii) of Proposition \ref{prop-split-equiv-2}. Thus, it is sufficient if we show that the construction in Theorem \ref{thm-hif-split} not only applies to just finite sets $\B$, but also to countable sets $\B$. However, this is readily verified.
\end{proof}

In particular, we see that it is consistent (relative to ZFC) to have $\Th(\mw / \overline{\A_\mathrm{HIF}}) = \mathrm{IPC}$. Unfortunately, we currently do not know if this already follows from ZFC or if it is independent of ZFC.

\begin{question}
Does $\Th(\mw / \overline{\A_\mathrm{HIF}}) = \mathrm{IPC}$ follow from $\mathrm{ZFC}$?
\end{question}

\subsection{Computably traceable functions}

A class that is closely related to the hyperimmune-free functions is the class $\A_\mathrm{trace}$ of computably traceable functions. We first recall its definition.

\begin{defi}{\rm(Terwijn and Zambella \cite{terwijn-zambella-2001})}
A set $T \subseteq \omega \times \omega$ is called a \emph{trace} if all sections $T^{[k]} = \{n \in \omega \mid (k,n) \in T\}$ are finite. A \emph{computable trace} is a trace such that the function which maps $k$ to the canonical index of $T^{[k]}$ is computable. A trace $T$ \emph{traces} a function $g$ if $g(k) \in T^{[k]}$ for every $k \in \omega$. A \emph{bound} is a function $h: \omega \to \omega$ that is non-decreasing and has infinite range. If $|T^{[k]}| \leq h(k)$ for all $k \in \omega$, we say that $h$ is a \emph{bound for} $T$.

Finally, a function $f$ is called \emph{computably traceable} if there exists a computable bound $h$ such that all (total) functions $g \leq_T f$ are traced by a computable trace bounded by $h$.
\end{defi}

Computable traceability can be seen as a uniform kind of hyperimmune-freeness. If $f$ is computably traceable, then it is certainly hyperimmune-free: if $g \leq_T f$ is traced by some computable trace $T$, then for the computable function $\tilde{g}(k) = \max\left(T^{[k]}\right)$ we have $g \leq \tilde{g}$. Conversely, if $f$ is hyperimmune-free and $g \leq_T f$, then $g$ has a computable trace: fix some computable $\tilde{g} \geq g$ and let $T_g = \{(k,m) \mid m \leq \tilde{g}(k)\}$. However, these traces $T_g$ need not be bounded by any uniform computable bound $h$. Computable traceability asserts that such a uniform bound does exist. It can be shown that there are hyperimmune-free functions which are not computably traceable, see Terwijn and Zambella \cite{terwijn-zambella-2001}.

The computably traceable functions naturally occur in algorithmic randomness. In \cite{terwijn-zambella-2001} it is shown that the computably traceable functions are precisely those functions which are low for Schnorr null, and in Kjos-Hanssen, Nies and Stephan \cite{kjoshanssen-nies-stephan-2005} it is shown that this class also coincides with the functions which are low for Schnorr randomness.

Terwijn and Zambella also showed that the usual Miller and Martin tree construction of hyperimmune-free degrees actually already yields a computably traceable degree. Combining their techniques with the next lemma, we can directly see that our constructions of hyperimmune-free degrees above can also be used to construct computably traceable degrees.

\begin{lem}
Let $A$ be computably traceable, and let $B$ be computably traceable relative to $A$. Then $B$ is computably traceable.
\end{lem}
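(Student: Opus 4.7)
The plan is to unpack the two hypotheses and then compose the resulting traces. Since $B$ is computably traceable relative to $A$, I fix an $A$-computable bound $h_B$ such that every total $g \leq_T B$ has an $A$-computable trace $T_g$ bounded by $h_B$. Since $A$ is computably traceable, fix a computable bound $h_A$ witnessing it. My first move is to replace $h_B$ by a computable dominator: because $h_B \leq_T A$, it has a computable trace $R$ bounded by $h_A$, and then $\tilde{h}_B(k) := \max(R^{[k]}) + k$ is a computable, non-decreasing function with infinite range satisfying $\tilde{h}_B \geq h_B$. Any $A$-computable trace bounded by $h_B$ is then also bounded by $\tilde{h}_B$, so without loss of generality the bound witnessing the relativised hypothesis is already computable.

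Now fix an arbitrary total $g \leq_T B$ and consider the function $t_g : k \mapsto$ canonical index of $T_g^{[k]}$. Since $T_g$ is $A$-computable, $t_g \leq_T A$, so by the computable traceability of $A$ there is a computable trace $S_g$ with $t_g(k) \in S_g^{[k]}$ and $|S_g^{[k]}| \leq h_A(k)$. I would then define a computable trace $U_g$ by
\[ U_g^{[k]} := \bigcup \bigl\{ D_i : i \in S_g^{[k]} \text{ and } |D_i| \leq \tilde{h}_B(k) \bigr\}, \]
where $D_i$ denotes the finite set with canonical index $i$. The code $t_g(k)$ lies in $S_g^{[k]}$ and $|D_{t_g(k)}| = |T_g^{[k]}| \leq \tilde{h}_B(k)$, so $D_{t_g(k)}$ is not filtered out and hence $g(k) \in T_g^{[k]} \subseteq U_g^{[k]}$. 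Moreover, $|U_g^{[k]}| \leq h_A(k) \cdot \tilde{h}_B(k)$, and $U_g$ is computable because its canonical indices can be computed uniformly in $k$ from $S_g$ and $\tilde{h}_B$. Setting $h(k) := h_A(k) \cdot \tilde{h}_B(k)$ yields a single computable bound that works for every $g \leq_T B$, since $h_A$ depends only on $A$ and $\tilde{h}_B$ depends only on $A$ and $B$.

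The only delicate point is the filtering step: one must not simply take $\bigcup_{i \in S_g^{[k]}} D_i$, because the spurious candidates $i \in S_g^{[k]} \setminus \{t_g(k)\}$ could decode to arbitrarily large sets, destroying the uniform bound. Passing from $g$ to its code $t_g$ and then discarding those codes whose decoded set exceeds $\tilde{h}_B(k)$ is exactly what keeps the final bound $h$ both computable and uniform in $g \leq_T B$, which is what the definition of computable traceability demands.
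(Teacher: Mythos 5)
Your proof is correct and follows essentially the same route as the paper: dominate the relativised bound by a computable function (the paper uses hyperimmune-freeness of $A$, which it has already noted follows from computable traceability), trace the index function $k \mapsto$ canonical index of $T_g^{[k]}$ using the traceability of $A$, and take a filtered union to get the bound $h_A \cdot \tilde{h}_B$. The only cosmetic difference is that you discard candidate sets $D_i$ with $|D_i| > \tilde{h}_B(k)$ outright where the paper truncates each $D_e$ to its $\tilde{h}_B(k)$ smallest elements; both yield the same uniform computable bound.
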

\begin{proof}
Let $h_1$ be a computable bound for the traces of functions computed by $A$ and let $h_2 \leq_T A$ be a bound for the traces of functions computed by $B$. Because $A$ is hyperimmune-free (as discussed above) $h_2$ is bounded by a computable function $\tilde{h}_2$.We claim: every function computed by $B$ has a trace bounded by the computable function $h_1 \cdot \tilde{h}_2$.

To this end, let $g \leq_T B$. Fix a trace $T \leq_T A$ for $g$ which is bounded by $h_2$ (and hence is also bounded by $\tilde{h}_2$). Then the function mapping $k$ to the canonical index of $T^{[k]}$ is computable in $A$, so because $A$ is computably traceable we can determine a computable trace $S$ for this function which is bounded by $h_1$.

Finally, denote by $D_{e,n}$ the (at most) $n$ smallest elements of the set $D_e$ corresponding to the canonical index $e$; i.e.\ $D_{e,n}$ consists of the $n$ smallest elements of $D_e$ if $|D_e| \geq n$, and $D_{e,n} = D_e$ otherwise. Now let $U$ be the computable trace such that $U^{[k]} = \bigcup_{e \in S^{[k]}} D_{e,\tilde{h}_2(k)}$. Then $U$ is clearly bounded by $h_1 \cdot \tilde{h}_2$. It also traces $g$, because $g(k) \in T^{[k]}$ and for some $e \in S^{[k]}$ we have $T^{[k]} = D_{e,\tilde{h}_2(k)}$.
\end{proof}

\begin{thm}
The class $\A_{\mathrm{trace,low}_2}$ of computably traceable functions which are low$_2$  is a splitting class. In particular, $\Th(\mw / \overline{\A_{\mathrm{trace,low}_2}}) = \mathrm{IPC}$.
\end{thm}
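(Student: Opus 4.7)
The plan is to follow the proof of Theorem~\ref{thm-hif-split} essentially verbatim, introducing only one refinement: the subtrees $T_{4e+2}$ that handle hyperimmune-freeness should be built following Terwijn and Zambella, so that the resulting set is not merely hyperimmune-free relative to $A$ but actually computably traceable relative to $A$. The preliminary checks are quick: the class $\A_{\mathrm{trace,low}_2}$ is downwards closed under Turing reducibility (both computable traceability, by definition, and low$_2$-ness are), and it is countable because every low$_2$ set is computable in $\emptyset''$. I would then apply Proposition~\ref{prop-split-equiv}(iii). Given a low$_2$ computably traceable $A$, a finite $\B \subseteq \{B \in \A_{\mathrm{trace,low}_2} \mid B \not\leq_T A\}$, and a $C_0$ in the same set, I would construct $C_1 = A \oplus D$ with the required properties.

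The sequence $T_0 \supseteq T_1 \supseteq \dotsb$ of $A$-computable binary trees is built exactly as in Theorem~\ref{thm-hif-split}, but with the totality case (b) of requirement~(iii) for $T_{4e+2}$ strengthened to Terwijn and Zambella's construction: the branching on $T_{4e+2}$ should be bounded by a function computable in $A$, uniformly in $e$, so that every function of the form $\{e\}^{A \oplus D}$ for $D$ a branch on $T_{4e+2}$ admits an $A$-computable trace with an $A$-computable bound. This guarantees that $C_1 = A \oplus D$ is computably traceable relative to $A$. Applying the preceding lemma (with $A$ playing the role of $A$ and $C_1$ the role of $B$) then yields that $C_1$ is computably traceable in the absolute sense. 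All the other clauses go through without change: $C_1$ is low$_2$, $C_1 \not\leq_T A$, $C_1 \not\geq_T B_i$ for every $B_i \in \B$, and $(C_0 \oplus C_1)' \geq_T \emptyset''$ forces $C_0 \oplus C_1$ to compute a function dominating every total computable function, so $C_0 \oplus C_1$ is not even hyperimmune-free, hence not computably traceable.

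The main obstacle—really a bookkeeping check rather than a genuine difficulty—is verifying that the Terwijn and Zambella branching bound on $T_{4e+2}$ is preserved under the subsequent operations that produce $T_{4e+3}$ (the $\tilde{e}$-splitting subtree used to diagonalise against the $B_i$) and $T_{4e+4}$ (the subtree used to code $\emptyset''$), so that the eventual branch $D$ really lies on a tree with a uniform $A$-computable branching bound. Since passing to a subtree can only decrease branching and the operations are all computable in $A''$, this causes no real trouble; the bound for the final tree is just the original bound restricted to the surviving nodes. Once this is in hand, the conclusion $\Th(\mw / \overline{\A_{\mathrm{trace,low}_2}}) = \mathrm{IPC}$ is immediate from the main theorem of Section~\ref{sec-ipc}.
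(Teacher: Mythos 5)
Your proposal is correct and follows essentially the same route as the paper, which proves this theorem by the single line ``As in Theorem~\ref{thm-hif-split}'', relying on the preceding lemma on relativised computable traceability together with Terwijn and Zambella's observation that the Miller--Martin totality trees already yield computable traceability relative to $A$ (the nodes of $T_{4e+2}$ at level $n$ give an $A$-computable trace of $\{e\}^{A\oplus D}$ with bound $2^n$, so no genuine modification of the construction is needed). Your remaining checks (downward closure, countability via low$_2$, and that $C_0\oplus C_1$ fails hyperimmune-freeness and hence computable traceability) match the paper's intent.
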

\begin{proof}
As in Theorem \ref{thm-hif-split}.
\end{proof}

\begin{thm}
Assume $\mathrm{CH}$. Then $\A_\mathrm{trace}$ is an $\aleph_1$ splitting class. In particular, $\Th(\mw / \overline{\A_\mathrm{trace}}) = \mathrm{IPC}$.
\end{thm}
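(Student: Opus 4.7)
The plan is to imitate Theorem \ref{thm-hif-split-ch} point by point, only replacing the hyperimmune-free construction with the computably traceable refinement already encoded in the Terwijn--Zambella tree method and the preceding lemma on relative computable traceability.

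First I would check the three structural hypotheses needed before Proposition \ref{prop-split-equiv-2} can be invoked. Cardinality: $\A_\mathrm{trace}$ is downward closed under $\leq_T$ (immediate from the definition, since any computable trace/bound for $g\leq_T f$ serves for any $\tilde g\leq_T g$), it is uncountable (this is already known from Terwijn--Zambella, as the Miller--Martin tree construction can be iterated to produce continuum many computably traceable degrees), and it is contained in $\omega^\omega$; together with CH this gives $|\A_\mathrm{trace}|=\aleph_1$. Countable chains have upper bounds: this is the analogue for $\A_\mathrm{trace}$ of Miller and Martin \cite[Theorem 2.2]{miller-martin-1968}, and I would establish it exactly as there, by an $\omega$-iteration of the Terwijn--Zambella tree construction, using the preceding lemma that computable traceability relative to a computably traceable set is absolute to fuse the successive traces into a single computable bound at each stage.

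Second, by Proposition \ref{prop-split-equiv-2} it suffices to check condition (iii). Here I would rerun the construction used to prove that $\A_{\mathrm{trace,low}_2}$ is a splitting class (the obvious adaptation of Theorem \ref{thm-hif-split}, obtained by replacing the hyperimmune-free requirements (iii) on the trees $T_{4e+2}$ with their Terwijn--Zambella computably traceable counterpart, and then appealing to the relative computable traceability lemma to conclude that $C_1=A\oplus D$ is itself computably traceable). The only new observation is that this construction still goes through when the set $\B$ is merely countable rather than finite: each $B\in\B$ contributes a single diagonalisation requirement of the form $R_{4e+3}$, so a countable $\B$ generates only countably many requirements, which is exactly the regime handled in Theorem \ref{thm-hif-split-ch}. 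One can simply interleave the diagonalisations against elements of $\B$ using the same fixed enumeration $\alpha$ of $n\times\omega$ (now with $n=\aleph_0$) and carry out the construction through $\omega$ many stages as before. The low$_2$ requirement of Theorem \ref{thm-hif-split} is dropped, which only removes constraints.

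Once condition (iii) of Proposition \ref{prop-split-equiv-2} is verified, the theorem's first sentence follows, and the second sentence is the general result that the theory of the factor associated with any $\aleph_1$ splitting class is $\mathrm{IPC}$, proved earlier via the transfinite $p$-morphism construction. The main obstacle is really the verification that the Terwijn--Zambella strengthening of the Miller--Martin tree argument survives when the diagonalisation set $\B$ is countable and an upper-bound-of-countable-chains construction is needed; both issues are fundamentally routine extensions of already established machinery in the paper, but they are the only places where something genuinely has to be checked rather than merely cited.
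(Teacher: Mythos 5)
Your proposal is correct and follows essentially the same route as the paper, whose proof of this theorem is literally ``as in Theorem \ref{thm-hif-split-ch}'': check cardinality under CH, verify that countable chains in $\A_\mathrm{trace}$ have upper bounds (the Terwijn--Zambella analogue of Miller--Martin, via the relative-traceability lemma), and observe that the tree construction of Theorem \ref{thm-hif-split}, adapted to traceability, still works for countable $\B$ so that Proposition \ref{prop-split-equiv-2}(iii) applies. You have merely made explicit the details the paper leaves to the reader, and done so accurately.
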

\begin{proof}
As in Theorem \ref{thm-hif-split-ch}.
\end{proof}

\begin{question}
Does $\Th(\mw / \overline{\A_\mathrm{trace}}) = \mathrm{IPC}$ follow from $\mathrm{ZFC}$?
\end{question}

\subsection*{Acknowledgements}

The author thanks Sebastiaan Terwijn for helpful discussions on the subject and for suggesting some of the classes studied above. Furthermore, the author thanks the anonymous referee for his help in correcting an error in section \ref{sec-hif}.

\providecommand{\bysame}{\leavevmode\hbox to3em{\hrulefill}\thinspace}
\providecommand{\MR}{\relax\ifhmode\unskip\space\fi MR }
\providecommand{\MRhref}[2]{%
  \href{http://www.ams.org/mathscinet-getitem?mr=#1}{#2}
}
\providecommand{\href}[2]{#2}


\begin{thebibliography}{10}

\bibitem{balbes-dwinger-1974}
R.~Balbes and P.~Dwinger, \emph{Distributive lattices}, University of Missouri
  Press, 1974.

\bibitem{chagrov-zakharyaschev-1997}
A.~Chagrov and M.~Zakharyaschev, \emph{Modal logic}, Clarendon Press, 1997.

\bibitem{dejongh-troelstra-1966}
D.~H.~J. de~Jongh and A.~S. Troelstra, \emph{On the connection of partially
  ordered sets with some pseudo-{B}oolean algebras}, Indagationes Mathematicae
  \textbf{28} (1966), 317--329.

\bibitem{haught-1986}
C.~A. Haught, \emph{The degrees below a 1-generic degree $< 0'$}, The Journal
  of Symbolic Logic \textbf{51} (1986), no.~3, 770--777.

\bibitem{hinman-2012}
P.~G. Hinman, \emph{A survey of {M}u{\v c}nik and {M}edvedev degrees}, The
  Bulletin of Symbolic Logic \textbf{18} (2012), no.~2, 161--229.

\bibitem{jockusch-1980}
C.~Jockusch, \emph{Degrees of generic sets}, Recursion theory: its
  generalisations and applications (F.~R. Drake and S.~S. Wainer, eds.), London
  Mathematical Society Lecture Note Series, Cambridge University Press, 1980,
  pp.~110--139.

\bibitem{kjoshanssen-nies-stephan-2005}
B.~Kjos-Hanssen, A.~Nies, and F.~Stephan, \emph{Lowness for the class of
  Schnorr random sets}, SIAM Journal on Computing \textbf{35} (2005), 647--657.

\bibitem{kleene-1945}
S.~C. Kleene, \emph{On the interpretation of intuitionistic number theory}, The
  Journal of Symbolic Logic \textbf{10} (1945), no.~4, 109--124.

\bibitem{mckinsey-tarski-1946}
J.~C.~C. McKinsey and A.~Tarski, \emph{On closed elements in closure algebras},
  The Annals of Mathematics Second Series \textbf{47} (1946), no.~1, 122--162.

\bibitem{mckinsey-tarski-1948}
\bysame, \emph{Some theorems about the sentential calculi of {L}ewis and
  {H}eyting}, The Journal of Symbolic Logic \textbf{13} (1948), no.~1, 1--15.

\bibitem{medvedev-1955}
Yu.~T. Medvedev, \emph{Degrees of difficulty of the mass problems}, Doklady
  Akademii Nauk SSSR, (NS) \textbf{104} (1955), no.~4, 501--504.

\bibitem{miller-martin-1968}
W.~Miller and D.~A. Martin, \emph{The degrees of hyperimmune sets}, Zeitschrift
  f{\"u}r Mathematische Logik und Grundlagen der Mathematik \textbf{14} (1968),
  159--166.

\bibitem{muchnik-1963}
A.~A. Muchnik, \emph{On strong and weak reducibilities of algorithmic
  problems}, Sibirskii Matematicheskii Zhurnal \textbf{4} (1963), 1328--1341.

\bibitem{odifreddi-1989}
P.~Odifreddi, \emph{Classical recursion theory}, Studies in Logic and the
  Foundations of Mathematics, vol. 125, North-Holland, 1989.

\bibitem{odifreddi-1999}
\bysame, \emph{Classical recursion theory: {V}olume {II}}, Studies in Logic and
  the Foundations of Mathematics, vol. 143, North-Holland, 1999.

\bibitem{posner-robinson-1981}
D.~B. Posner and R.~W. Robinson, \emph{Degrees joining to $0'$}, The Journal of
  Symbolic Logic \textbf{46} (1981), no.~4, 714--722.

\bibitem{skvortsova-1988}
E.~Z. Skvortsova, \emph{A faithful interpretation of the intuitionistic
  propositional calculus by means of an initial segment of the {M}edvedev
  lattice}, Sibirskii Matematicheskii Zhurnal \textbf{29} (1988), no.~1,
  171--178.

\bibitem{slaman-solovay-1991}
T.~A. Slaman and R.~M. Solovay, \emph{When oracles do not help}, COLT '91:
  Proceedings of the fourth annual workshop on Computational learning theory
  (M.~K. Warmuth and L.~G. Valiant, eds.), Morgan Kaufmann Publishers Inc.,
  1991, pp.~379--383.

\bibitem{sorbi-1996}
A.~Sorbi, \emph{The {M}edvedev lattice of degrees of difficulty},
  Computability, Enumerability, Unsolvability: Directions in Recursion Theory
  (S.~B. Cooper, T.~A. Slaman, and S.~S. Wainer, eds.), London Mathematical
  Society Lecture Notes, vol. 224, Cambridge University Press, 1996,
  pp.~289--312.

\bibitem{sorbi-terwijn-2012}
A.~Sorbi and S.~A. Terwijn, \emph{Intuitionistic logic and {M}uchnik degrees},
  Algebra Universalis \textbf{67} (2012), 175--188.

\bibitem{terwijn-2006}
S.~A. Terwijn, \emph{The {M}edvedev lattice of computably closed sets}, Archive
  for Mathematical Logic \textbf{45} (2006), no.~2, 179--190.

\bibitem{terwijn-zambella-2001}
S.~A. Terwijn and D.~Zambella, \emph{Computational randomness and lowness}, The
  Journal of Symbolic Logic \textbf{66} (2001), no.~3, 1199--1205.

\bibitem{vanoosten-2008}
J.~van Oosten, \emph{Realizability: an introduction to its categorical side},
  Studies in Logic and the Foundations of Mathematics, vol. 152, Elsevier,
  2008.

\end{thebibliography}
\end{document}